\documentclass[12pt,a4paper]{amsart}
\usepackage{amsfonts,amssymb,amscd,amsmath,enumerate,verbatim}
\usepackage{amsmath,amsthm,amssymb}
\usepackage{ulem,ascmac} 
\usepackage[dvips]{graphicx}
\usepackage{fancybox}

\usepackage{url}

\theoremstyle{plain}
\newtheorem{thm}{Theorem}[section]
\newtheorem{prop}[thm]{Proposition}
\newtheorem{cor}[thm]{Corollary}
\newtheorem{lem}[thm]{Lemma}
\newtheorem{conj}[thm]{Conjecture}

\theoremstyle{definition}

\newtheorem{rem}[thm]{Remark}
\newtheorem{prob}[thm]{Problem}
\newtheorem{obs}[thm]{Observation}
\newtheorem{cl}[thm]{Claim}

\newcommand{\Tc}{{\mathcal{T}}}

\newcommand{\rleft}{\mathopen{}\mathclose\bgroup\left}
\newcommand{\rright}{\aftergroup\egroup\right}


\textwidth=15cm \textheight=22cm \topmargin=0.5cm
\oddsidemargin=0.5cm \evensidemargin=0.5cm \pagestyle{plain}



\newcommand{\C}{{\mathcal C}}
\newcommand{\Kc}{{\rm Kc}}
\usepackage{color}


\title[Kempe equivalence of almost bipartite graphs]{Kempe equivalence of almost bipartite graphs}

\author[A Higashitani \and N. Matsumoto]{Akihiro Higashitani \and Naoki Matsumoto} 
\address[A Higashitani]{Department of Pure and Applied Mathematics, Graduate School of Information Science and Technology, Osaka University, Suita, Osaka 565-0871, Japan}
\email{higashitani@ist.osaka-u.ac.jp}
\address[N. Matsumoto]{Research Institute for Digital Media and Content, Keio University, Yokohama, Kanagawa 232-0062, Japan}
\email{naoki.matsumo10@gmail.com}
\thanks{}

\subjclass[2020]{Primary: 05C15; Secondary: 05C35}
\keywords{Kempe equivalence, Kempe change, almost bipartite graph, color-critical graph}
\thanks{The first named author is supported by JSPS Grant-in-Aid for Scientific Research (C) 20K03513. 
The second named author is supported by JSPS Grant-in-Aid for Early-Career Scientists 19K14583.}

\begin{document}

\maketitle

\begin{abstract}
Two vertex colorings of a graph are Kempe equivalent if they can be transformed into each other 
by a sequence of switchings of two colors of vertices.
It is PSPACE-complete to determine 
whether two given vertex $k$-colorings of a graph are Kempe equivalent for any fixed $k\geq 3$,
and it is easy to see that every two vertex colorings of any bipartite graph are Kempe equivalent.
In this paper, we consider Kempe equivalence of {\it almost} bipartite graphs
which can be obtained from a bipartite graph by adding several edges 
to connect two vertices in the same partite set.
We give a conjecture of Kempe equivalence of such graphs,
and we prove several partial solutions and best possibility of the conjecture,
but it is more lately proved by Cranston and Feghali that this conjecture is false in general.
\end{abstract}

\section{Introduction}\label{sec:intro}

A {\it Kempe change} has been introduced by Alfred Kempe in his false proof of the four color theorem,
which is to switch two colors of 
a subgraph of a vertex-colored graph induced by vertices with only two colors.
Even today, this remains one of the fundamental and most powerful tools in graph coloring theory.
Two colorings are {\it Kempe equivalent}
if they are transformed into each other by a sequence of Kempe changes.
The Kempe equivalence of vertex colorings of graphs is deeply and widely studied,
since it is not only an important research subject in combinatorial reconfiguration
but also is related to many other subjects in graph theory;
for example, random coloring~\cite{vigoda2000improved},
perfectly contractile graph~\cite{bertschi1990perfectly},
a recoloring version of Hadwiger's conjecture~\cite{las1981kempe,bonamy2021recolouring} and so on.
Moreover, Kempe equivalence has many applications to various research fields;
e.g., statistical physics~\cite{mohar2009new}.
On the other hand, it is PSPACE-complete~\cite{bonamy2020diameter} to determine 
whether two given $k$-colorings of a graph $G$ are Kempe equivalent for any fixed $k\geq 3$,
even if $k=3$ and $G$ is a planar graph with maximum degree~6,
where a {\it $k$-coloring} is a proper vertex coloring with $k$ colors.
So it is hard in general to transform a given $k$-coloring into another $k$-coloring by Kempe changes.

Mohar~\cite{mohar2006kempe} surveyed the study of Kempe changes
and showed several fundamental results 
on Kempe equivalence of vertex/edge colorings.
He also proposed many interesting problems 
one of which concerns Kempe equivalence of regular graphs
and is completely solved in~\cite{bonamy2019conjecture}.
Here we introduce two results described in~\cite{mohar2006kempe} below, which are used to prove our results
(the second is originally proved in \cite{las1981kempe}).

\begin{prop}[\cite{mohar2006kempe}]\label{prop:bipar}
Let $G$ be a bipartite graph.
Every two $k$-colorings of $G$ with $k \geq 2$ are Kempe equivalent.
\end{prop}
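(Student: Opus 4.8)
The plan is to fix a bipartition $V(G) = A \sqcup B$ and to show that \emph{every} $k$-coloring of $G$ is Kempe equivalent to the ``standard'' $2$-coloring $c_0$ given by $c_0(a) = 1$ for $a \in A$ and $c_0(b) = 2$ for $b \in B$; since Kempe equivalence is an equivalence relation (Kempe changes are reversible), the proposition then follows at once. To organize the reduction, I attach to each $k$-coloring $c$ the potential $\Phi(c) = \#\{\, a \in A : c(a) \neq 1 \,\}$, and prove by induction on $\Phi(c)$ that $c$ is Kempe equivalent to a coloring $c'$ with $(c')^{-1}(1) \supseteq A$.

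For the inductive step, suppose $\Phi(c) > 0$ and choose $a \in A$ with $c(a) = i \neq 1$. Let $H$ be the subgraph of $G$ induced by the vertices colored $1$ or $i$, and let $C$ be the connected component of $H$ containing $a$. If $C = \{a\}$, then no neighbor of $a$ is colored $1$, so the Kempe change on the chain $C$ simply recolors $a$ from $i$ to $1$ and decreases $\Phi$ by $1$. If $C$ has an edge, then $C$ is a connected bipartite graph and hence has a \emph{unique} bipartition; both $(C \cap A,\, C \cap B)$ and $(C \cap c^{-1}(1),\, C \cap c^{-1}(i))$ are bipartitions of $C$, so they coincide, and since $a \in C \cap A$ is colored $i$ we must have $C \cap A = C \cap c^{-1}(i)$ and $C \cap B = C \cap c^{-1}(1)$. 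Performing the Kempe change that swaps colors $1$ and $i$ on $C$ therefore recolors every vertex of the nonempty set $C \cap A$ from $i$ to $1$, while it only moves vertices of $C \cap B$ (which do not affect $\Phi$); hence $\Phi$ drops by $|C \cap A| \geq 1$. By the induction hypothesis, $c$ is Kempe equivalent to some $c'$ with $A \subseteq (c')^{-1}(1)$.

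It remains to pass from such a $c'$ to $c_0$. Because $G$ is bipartite, $B$ is an independent set, so every neighbor of a vertex $b \in B$ lies in $A$ and is colored $1$ by $c'$; in particular $b$ is an isolated vertex of the subgraph induced by the two colors $2$ and $c'(b)$. Thus a single Kempe change on the component $\{b\}$ recolors $b$ to $2$ without touching any other vertex, and applying this successively to every $b \in B$ transforms $c'$ into $c_0$. Therefore $c$ is Kempe equivalent to $c_0$, completing the proof.

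The main obstacle — really the only nonroutine point — is the structural claim in the inductive step: one needs a Kempe change that corrects a whole batch of misplaced $A$-vertices \emph{without} disturbing the $A$-vertices already colored $1$. This is precisely what uniqueness of the bipartition of a connected bipartite graph provides, since it forces the two color classes inside the chain $C$ to align with the sides $A$ and $B$; everything else is bookkeeping with the monotone potential $\Phi$. (One could instead induct on $|V(G)|$ by deleting a vertex, but the potential argument is cleaner.)
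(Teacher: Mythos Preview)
Your proof is correct. The paper does not actually prove this proposition; it is quoted as a known result from Mohar~(2006), so there is no ``paper's own proof'' to compare against. Your argument via the potential $\Phi$ and the uniqueness of the bipartition of a connected bipartite subgraph is a clean, standard way to establish the result: the key observation that inside any $(1,i)$-Kempe chain the color classes must coincide with the sides $A$ and $B$ is exactly what guarantees monotonicity of $\Phi$, and the cleanup on $B$ is routine. One cosmetic point: in the final step you should explicitly exclude the trivial case $c'(b)=2$ (nothing to do) before describing the Kempe change on $\{b\}$, but this does not affect correctness.
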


A graph $G$ is {\it $d$-degenerate} if every subgraph of $G$ contains a vertex of degree at most $d$.

\begin{prop}[\cite{las1981kempe,mohar2006kempe}]\label{prop:dege}
Let $G$ be a $d$-degenerate graph with $d \geq 1$.
For any integer $k > d$, every two $k$-colorings of $G$ are Kempe equivalent.
\end{prop}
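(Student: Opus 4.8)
The plan is to induct on $n=|V(G)|$. For $n=1$ the two given colorings either coincide or differ in the one color used, and a single Kempe change swapping those two colors (available since $k\ge 2$) transforms one into the other. So let $n\ge 2$. As $G$ is $d$-degenerate it has a vertex $v$ with $\deg_G(v)\le d$; set $H:=G-v$, which is again $d$-degenerate and has $n-1$ vertices. Fix two $k$-colorings $\alpha,\beta$ of $G$.

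First I would invoke the inductive hypothesis on $H$ to get a finite sequence of Kempe changes in $H$ taking $\alpha|_H$ to $\beta|_H$. The main work is to \emph{lift} this sequence to $G$, one Kempe change at a time, keeping the invariant that the restriction to $H$ of the current coloring of $G$ agrees with the $H$-coloring reached so far, while letting the color of $v$ drift. When $H$ has been brought to $\beta|_H$, the color $\beta(v)$ does not occur on the neighbourhood $N(v)$ of $v$ (since $\beta$ is proper), so $v$ is an isolated vertex of the subgraph two-colored by its current color and $\beta(v)$; flipping it there yields $\beta$, and we are done.

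So everything comes down to lifting a single Kempe change. Say it uses colors $\{a,b\}$ on a component $C$ of the $\{a,b\}$-subgraph of $H$. If the current color of $v$ is not in $\{a,b\}$, then $v$ is absent from the $\{a,b\}$-subgraph of $G$, $C$ is still a component there, and the same change is legal in $G$ without touching $v$. Otherwise, say $v$ has color $a$. If no neighbor of $v$ lies in $C$, then again $C$ is a component of the $\{a,b\}$-subgraph of $G$ and the change is legal as is. If $C$ meets $N(v)$, then $b$ occurs on $N(v)$, and I distinguish two subcases. If some color $c\notin\{a,b\}$ is missing from $N(v)$, first recolor $v$ from $a$ to $c$ by the isolated-vertex Kempe change on $\{a,c\}$ (which leaves $H$ untouched), and then perform the original change on $C$, now legal. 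The leftover subcase is that $N(v)$ uses precisely the $k-1$ colors other than $a$; this forces $\deg_G(v)=d=k-1$ with $N(v)$ rainbow, so $v$ has a unique $b$-colored neighbor $u$, the component $C$ is exactly the component of $u$, and the component of $v$ in the $\{a,b\}$-subgraph of $G$ is precisely $\{v\}\cup C$. Performing the $\{a,b\}$-Kempe change on this component effects the desired change on $H$ and simultaneously recolors $v$ (legally) to $b$, which is all we need.

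I expect this last ``forced-color'' subcase to be the real obstacle: it is exactly the situation in which $v$ admits no legal recoloring with the available colors, so one cannot simply get $v$ out of the way and must instead enlarge the Kempe chain to absorb $v$. The remaining ingredients are routine: an induced subgraph of a $d$-degenerate graph is $d$-degenerate, a vertex isolated in a two-colored subgraph can always be flipped, and a Kempe change on a component of a two-colored subgraph of $G$ preserves properness and leaves untouched any vertex not in that component.
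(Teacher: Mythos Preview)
The paper does not give its own proof of this proposition; it is quoted as a known result from Mohar~\cite{mohar2006kempe}. Your argument is correct and is essentially the standard inductive proof of this fact: delete a vertex $v$ of degree at most $d$, apply induction to $H=G-v$, and lift each Kempe change in $H$ back to $G$, temporarily recoloring $v$ to a color absent from $N(v)$ whenever it would obstruct the lift. Your treatment of the ``forced-color'' subcase is exactly right: it can only occur when $k=d+1$ and $N(v)$ is rainbow, in which case $v$ has a unique $b$-colored neighbor, the $G$-component of $v$ in $G(a,b)$ is precisely $\{v\}\cup C$, and flipping it performs the desired $H$-change while legally recoloring $v$. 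The final adjustment of $v$ to $\beta(v)$ via an isolated-vertex flip is also correct, since $\beta(v)$ is absent from $N(v)$ once $H$ carries $\beta|_H$.
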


By these results,
it is an important problem 
to consider Kempe equivalence of two colorings of non-bipartite graphs without small maximum degree.
Then we focus on {\it almost} bipartite graphs
which can be obtained from a bipartite graph by adding several edges 
to connect two vertices in the same partite set.
Such a graph is one of most reasonable non-bipartite graphs,
and it is often investigated in various other contexts; 
for example, see~\cite{damaschke2003linear,kostochka2015minimum}.
In this paper,
we give an interesting conjecture on Kempe equivalence of almost bipartite graphs
and show partial solutions of it.
We also verify that the conjecture is best possible in some sense.
Moreover, we mention Kempe equivalence on color-critical graphs.

In the remaining of this section,
we precisely define terminologies used in this paper,
and then introduce our main conjecture and results.

\subsection{Definitions}\label{sec:def}

A graph $G$ is {\it $k$-colorable} 
if it has a map $c : V(G) \to \{1,2,\dots,k\}$ 
with $c(u) \neq c(v)$ for any $uv \in E(G)$,
where $V(G)$ and $E(G)$ denote the vertex and edge set of $G$, respectively,
and such a map is called a {\it $k$-coloring}.
The chromatic number of $G$, denoted by $\chi(G)$, 
is the minimum number $k$ such that $G$ is $k$-colorable.
A graph $G$ is {\it $k$-chromatic} if $\chi(G) = k$.
A graph $G$ is {\it $k$-critical}\/ if 
it is $k$-chromatic but every proper subgraph of $G$ is $(k-1)$-colorable.
An {\it odd wheel} is the graph obtained from an odd cycle $C$
by adding a vertex $v$ and joining $v$ to every vertex of $C$.
Note that every odd wheel is 4-chromatic and
the smallest odd wheel is $K_4$,
where $K_n$ denotes the complete graph with $n$ vertices.

For a vertex colored graph $G$,
$G(i,j)$ denotes the subgraph induced by all vertices colored with $i$ or $j$.
Every connected component $D$ of $G(i,j)$ is called a {\it Kempe component} (or {\it {\rm K}-component}),
and such a component $D$ is also called an {\it $(i,j)$-component}.
An edge $e \in E(G(i,j))$ is called an {\it $(i,j)$-edge}.
By switching the colors $i$ and $j$ on $D$, a new coloring can be obtained.
This operation is called a {\it Kempe change} (or {\it {\rm K}-change}).
In particular, we call a K-change on a component of $G(i,j)$ an {\it $(i,j)$-change}.
If a K-change is applied on a component containing a vertex $v$,
then such a K-change is also called a K-change {\it concerning} $v$.
Two $k$-colorings $c_1$ and $c_2$ of a graph $G$ are {\it Kempe equivalent} 
(or {\it {\rm K}-equivalent}),
denoted by $c_1 \sim_k c_2$,
if $c_1$ can be obtained from $c_2$ by a sequence of Kempe changes,
possibly involving more than one pair of colors in successive Kempe changes.
Let $\C_k = \C_k(G)$ be the set of all $k$-colorings of $G$.
The equivalence classes $\C_k / \sim_k$ are called the {\it {\rm K}$^k$-classes}.
The number of K$^k$-classes of $G$ is denoted by $\Kc(G,k)$.

For an integer $\ell \geq 0$,
a graph is called a {\it $B + E_{\ell}$ graph} 
(resp., a {\it $B + M_{\ell}$ graph})
if it is obtained from a bipartite graph $B$ by adding some $\ell$ edges
(resp., a matching of size $\ell$),
where a {\it matching} is a set of edges sharing no vertices each other.
For such graphs, 
the bipartite graph $B$ to which several edges are added is called a {\it based} bipartite graph,
and $E_{\ell}$ denotes the set of $\ell$ edges added.

For fundamental terminologies and notations undefined in this paper, 
we refer the reader to~\cite{bondy1976graph}.

\subsection{Results \& Problems}

For a $k$-colorable graph $G$,
if $|E(G)| < \binom{k}{2}$, then $G(i,j)$ is not connected for some distinct colors $i,j$.
In addition to this,
if $G$ is a $B + E_{\ell}$ graph,
then there is no edge in $E_{\ell} \cap E(G(i,j))$ for some distinct colors $i,j$.
This fact implies the following.

\begin{obs}\label{obs:1stset}
Let $G$ be a $k$-colorable $B+E_{\ell}$ graph.
If neither $S$ nor $T$ has an $(i,j)$-edge,
then the $k$-coloring of $G$ is Kempe equivalent to a $k$-coloring 
such that $i \notin C(S)$ and $j \notin C(T)$.
\end{obs}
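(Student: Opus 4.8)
The plan is to determine the bipartite structure of $G(i,j)$ forced by the hypothesis, and then to perform a single $(i,j)$-change on each Kempe component so as to align the color $i$ with the partite set $T$ and the color $j$ with the partite set $S$.

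First I would fix a $k$-coloring $c$ of $G$ and let $S,T$ be the partite sets of the based bipartite graph $B$, so that every edge of $B$ joins $S$ to $T$ and the only edges of $G$ lying inside $S$ or inside $T$ belong to $E_\ell$. If $uv$ is an edge of $G(i,j)$ with $u,v\in S$, then $c(u)\neq c(v)$ and $c(u),c(v)\in\{i,j\}$, so $uv$ is an $(i,j)$-edge inside $S$, which is excluded by hypothesis; the same applies to $T$. Hence every edge of $G(i,j)$ joins $S$ to $T$, so $G(i,j)$ is bipartite with parts $S\cap V(G(i,j))$ and $T\cap V(G(i,j))$.

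Next I would go through the Kempe components $D$ of $G(i,j)$ one at a time, writing $D_i,D_j$ for the sets of vertices of $D$ colored $i$, resp. $j$. If $D$ has at least one edge, then $D$ is a connected bipartite graph, so its bipartition into two independent sets is unique; since both $\{D\cap S,\,D\cap T\}$ and $\{D_i,\,D_j\}$ are such bipartitions of $D$, they coincide, i.e.\ $D\cap S$ is monochromatic and $D\cap T$ is monochromatic with the other of the two colors. Performing the $(i,j)$-change on $D$ if necessary, I may assume $D\cap S$ is colored $j$ and $D\cap T$ is colored $i$. If $D$ is a single vertex $v$, then the neighbors of $v$ are colored neither $i$ nor $j$, so recoloring $v$ (from $i$ to $j$ when $v\in S$, from $j$ to $i$ when $v\in T$) is a legitimate $(i,j)$-change, which I carry out in those two cases and omit otherwise.

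Finally I would note that each of these steps is a Kempe change for the current coloring, and — this is the point that makes the scheme work — that an $(i,j)$-change performed on a component of $G(i,j)$ changes neither $V(G(i,j))$ nor the decomposition of $G(i,j)$ into components; hence the finitely many steps above are carried out independently, and their composition is a sequence of Kempe changes from $c$ to a $k$-coloring $c'\sim_k c$. In $c'$, every vertex of $S$ originally colored $i$ or $j$ is colored $j$, every vertex of $T$ originally colored $i$ or $j$ is colored $i$, and all remaining vertices are untouched, so $i\notin C(S)$ and $j\notin C(T)$. The only genuinely delicate point is the uniqueness-of-bipartition argument forcing $D\cap S$ to be monochromatic in each component with an edge (together with checking that single-vertex components can be recolored freely); the rest is bookkeeping.
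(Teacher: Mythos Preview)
Your argument is correct and is precisely the natural justification of the observation: since no edge of $E_\ell$ is an $(i,j)$-edge, every edge of $G(i,j)$ crosses between $S$ and $T$, so each Kempe component is either a single vertex or a connected bipartite graph whose unique bipartition must agree with $\{D\cap S,\,D\cap T\}$, and one $(i,j)$-change per component aligns the colors as desired. The paper itself states this as an observation without proof (it is introduced only by the phrase ``This fact implies the following''), so your write-up is simply a careful elaboration of what the authors leave implicit.
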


Moreover, when $j \notin C(T)$,
if a connected component $H$ in the subgraph induced by $S$ has no vertex of color~$j$,
then we can make $H$ have a vertex of color~$j$ by applying an $(r,j)$-change to a vertex in $H$ with color $r$;
note that $H$ is an isolated vertex in $G(r,j)$.
Thus, we have the following observation.


\begin{obs}\label{obs:2ndset}
Let $G$ be a $k$-colorable $B+E_{\ell}$ graph with $\ell < \binom{k}{2}$,
and $S$ and $T$ be partite sets of the based bipartite graph.
Suppose that $G$ is already colored by $k$ colors so that $S$ (resp., $T$) has no vertex with color~$i$ (resp.,~$j$).
Then the $k$-coloring of $G$ can be 
transformed into a $k$-coloring by K-changes
such that each component of the subgraph induced by $S$ (resp., $T$)
has a vertex color~$j$ (resp.,~$i$).
\end{obs}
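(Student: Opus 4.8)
The plan is to deal with $S$ and $T$ one after the other: first I would make every component of the subgraph induced by $S$ contain a vertex of color~$j$, and then, by the symmetric argument, make every component of the subgraph induced by $T$ contain a vertex of color~$i$. The key point is that the first batch of K-changes recolors only vertices of $S$ and the second batch only vertices of $T$, so the two stages cannot undo each other's work. (We may assume $i\neq j$, which is the relevant case.)

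For the first stage, fix a component $H$ of the subgraph induced by $S$ containing no vertex of color~$j$, pick any $v\in H$, and set $r\coloneqq c(v)$; then $r\neq i$ since $i\notin C(S)$, and $r\neq j$ since $H$ has no vertex of color~$j$. I claim $\{v\}$ is an $(r,j)$-component of $G$. Indeed, every neighbor of $v$ lies in $S$ or in $T$: a neighbor in $T$ has color $\neq j$ because $j\notin C(T)$; a neighbor in $S$ lies in $H$, hence has color $\neq j$ as well; and no neighbor of $v$ has color $r$ since the coloring is proper. So $v$ is an isolated vertex of $G(r,j)$, and the $(r,j)$-change on $\{v\}$ recolors $v$ to~$j$, keeping a proper $k$-coloring. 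I would perform this once for each component $H$ of the induced subgraph on $S$ lacking color~$j$, in any order; this is legitimate because the chosen vertices lie in pairwise distinct components of that induced subgraph, hence are pairwise nonadjacent, and no vertex of $T$ is ever recolored, so each chosen vertex is still isolated in its $G(r,j)$ at the moment its K-change is applied. After this stage every component of the induced subgraph on $S$ contains a vertex of color~$j$; moreover every recolored vertex went from a color $\neq i$ to $j\neq i$, so $i\notin C(S)$ still holds.

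For the second stage I would run the same argument with $(S,j)$ replaced by $(T,i)$: for each component $H'$ of the induced subgraph on $T$ with no vertex of color~$i$, pick $w\in H'$, put $r\coloneqq c(w)\neq i$, note that $\{w\}$ is an $(r,i)$-component because a neighbor of $w$ in $S$ has color $\neq i$ (as $i\notin C(S)$ still holds) and a neighbor of $w$ in $T$ lies in $H'$ and so has color $\neq i$, and then apply the $(r,i)$-change to $\{w\}$, recoloring $w$ to~$i$. Since this stage recolors only vertices of $T$, every component of the induced subgraph on $S$ still contains a vertex of color~$j$, and the resulting $k$-coloring has the asserted property. The argument is essentially bookkeeping; the only points that require care are (i) checking that each singleton really is a Kempe component at the instant its change is performed --- this is where the hypotheses $i\notin C(S)$ and $j\notin C(T)$ are used, together with the fact that if $v$ and one of its neighbors lie in the same partite set then they lie in the same component of the induced subgraph on that set --- and (ii) observing that the two stages act on disjoint vertex sets, so neither spoils the other's conclusion.
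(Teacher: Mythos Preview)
Your proof is correct and follows essentially the same approach as the paper: pick a vertex $v$ in a component $H$ lacking the desired color, observe that $v$ is isolated in the relevant two-color subgraph (since neither its neighbors in the opposite partite set nor its neighbors within $H$ carry the missing color), and recolor it by a single-vertex K-change. The paper only sketches this for $S$ in the paragraph preceding the observation; you have filled in the details and also verified that the two stages (for $S$ and then for $T$) do not interfere, which the paper leaves implicit.
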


By these observations, 
if the number of added edges is less than $\binom{k}{2}$ for a $k$-colorable $B+E_{\ell}$ graph, 
then we can have a nice $k$-coloring by K-changes.
Such a $k$-coloring may help us to show the K-equivalence of every two colorings of a given graph.
Therefore, we conjecture the following, 
but this is more lately disproved for $k \geq 8$ 
by Cranston and Feghali~\cite{cranston2023kempe}.

\begin{conj}[False for $k \geq 8$]\label{conj:main}
Let $G$ be a $(k-1)$-colorable $B + E_{\ell}$ graph with $k \geq 4$ and $\ell < \binom{k}{2}$.
Then $\Kc(G,k) = 1$.
\end{conj}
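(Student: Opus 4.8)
The plan is to prove Conjecture~\ref{conj:main} by showing that every $k$-coloring of $G$ is K-equivalent to a ``canonical'' coloring depending only on the based bipartite graph $B$ and on $E_\ell$, from which transitivity of $\sim_k$ finishes the job. First I would fix a bipartition $V(B)=S\cup T$ and note that every added edge of $E_\ell$ lies entirely in $S$ or entirely in $T$; since $\ell<\binom{k}{2}$, a counting argument (as in Observation~\ref{obs:1stset}) produces a pair of colors $i,j$ such that neither $S$ nor $T$ contains an $(i,j)$-edge of $E_\ell$, and since $B$ itself is bipartite there are no $(i,j)$-edges inside $S$ or inside $T$ at all. Applying Observations~\ref{obs:1stset} and~\ref{obs:2ndset}, I may therefore assume the given coloring has been moved by K-changes to one where $i\notin C(S)$, $j\notin C(T)$, and moreover every connected component of $G[S]$ carries color $j$ and every component of $G[T]$ carries color $i$.

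The next step is to ``sweep'' $S$ towards a standard state. Because $i$ does not appear on $S$ and $G[S]$ has no $(i,j)$-edges of its own, every vertex of $S$ colored with some $r\neq i$ is, together with the color-$i$ vertices it might see, an isolated vertex in $G(i,r)$ unless it has a neighbour in $T$ of color $i$ --- but we arranged $i\notin C(T)$ is false; rather $i$ \emph{can} appear on $T$. Here the key device is to use Proposition~\ref{prop:dege}: after the normalization above, I would argue that $G$ with the colors other than a suitably chosen pair ``frozen'' behaves like a low-degenerate graph on the relevant K-components, so that colorings agreeing outside a controlled set are K-equivalent. Concretely, I expect to push all of $S$ onto the two colors $\{i,j\}$ (legal since $G[S]$ is edgeless after removing $E_\ell$-edges, but $E_\ell$-edges in $S$ obstruct this, so one first routes those few vertices using the spare colors, of which there are enough because $\ell<\binom{k}{2}$ and $G$ is only $(k-1)$-chromatic, leaving at least one genuinely free color), and symmetrically recolor $T$ using $\{i,j\}$ together with patching colors for its $E_\ell$-edges. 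Iterating Observation~\ref{obs:2ndset} and the degeneracy proposition on the bipartite-like residual structure drives the coloring to a normal form determined by $B$ and $E_\ell$ alone.

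The main obstacle, and where the real work lies, is handling the few edges of $E_\ell$ inside a partite set: these are exactly what prevents $S$ (or $T$) from being two-colored, and a careless recoloring of an endpoint of such an edge can create new monochromatic K-components that are hard to control or can undo previous progress. The plan is to treat the at most $\ell<\binom{k}{2}$ such edges one at a time, each time using a color $r\notin\{i,j\}$ that is locally free in the relevant neighbourhood --- such an $r$ exists because the total color budget $k$ exceeds $\chi(G)+1$-ish after accounting for the two reserved colors and the bounded number of already-patched edges, and if necessary one first performs an $(r,s)$-change to vacate $r$ near the target vertex, invoking Proposition~\ref{prop:dege} on the component involved. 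Once every endpoint of an $E_\ell$-edge has been assigned its patch color, the remainder of each partite set genuinely uses only $\{i,j\}$, the graph restricted to those vertices is bipartite (it is a subgraph of $B$), Proposition~\ref{prop:bipar} applies to standardize it, and the global coloring is pinned down up to the finitely many documented choices, all of which are themselves interchangeable by further K-changes; this yields $\Kc(G,k)=1$.
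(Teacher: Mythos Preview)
This statement is Conjecture~\ref{conj:main}, which the paper explicitly leaves \emph{open}. The paper proves only partial cases (Theorems~\ref{thm:bm5}, \ref{thm:c3e5}, \ref{thm:main}) and shows sharpness (Propositions~\ref{prop:bm6exist3}, \ref{prop:general}); in the concluding Section~\ref{sec:7} the authors say that a proof of the full conjecture ``probably need[s] a new proof method without induction on $k$.'' So there is no proof in the paper to compare against, and your proposal is an attempt at an open problem.

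As for the content of your sketch, there is a genuine gap at the ``sweep'' step. You write that you will ``push all of $S$ onto the two colors $\{i,j\}$,'' but $G[S]$ is not edgeless: the edges of $E_\ell$ inside $S$ force $|C(S)|\ge \chi(G[S])$, which can be $3$ or more, so a two-coloring of $S$ is simply impossible in general. Your ``patching'' idea---route the endpoints of $E_\ell$-edges to spare colors one at a time---does not come with any mechanism guaranteeing that later patches do not undo earlier ones, nor that the spare color you need is actually free in the right neighbourhood. The appeal to Proposition~\ref{prop:dege} is also misplaced: that proposition concerns $d$-degenerate graphs with $k>d$, but the relevant K-components here are subgraphs of $G$ whose degeneracy you have not bounded, and the ``frozen colors'' device you describe is not what degeneracy gives you. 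Finally, the phrase ``the total color budget $k$ exceeds $\chi(G)+1$-ish'' is not an argument; the paper's proofs of even the restricted cases (see the long case analysis in Section~\ref{sec:4} for $k=4$, $\ell\le 5$) show that controlling the interaction between $E_S$-edges, $E_T$-edges, and the bipartite edges requires delicate local reasoning that your outline does not supply.
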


Throughout this paper,
we show several partial solutions (for small $k$) 
and the sharpness of this conjecture.
(We do not consider the case when $k=3$ in the conjecture since $G$ is bipartite in that case.)

\smallskip
\noindent
\underline{$3$- or $4$-colorable case}

For Kempe equivalence of colorings of 3- or 4-colorable $B+E_{\ell}$ or $B + M_{\ell}$ graphs,
we have the following results.
Note that $B + M_{\ell}$ is 4-colorable for any $\ell \geq 0$
and that Theorem~\ref{thm:c3e5} is a positive partial solution of Conjecture~\ref{conj:main}.

\begin{thm}\label{thm:bm5}
Let $G$ be a $B + M_{\ell}$ graph with $k\geq 4$ and $\ell < \binom{k}{2}$.
Then $\Kc(G,k) = 1$.
\end{thm}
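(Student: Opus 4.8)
The plan is to induct on $\ell$, using Proposition~\ref{prop:bipar} as the base case $\ell=0$. Fix a $B+M_\ell$ graph $G$ with $k\ge 4$ and $\ell<\binom{k}{2}$, say with based bipartite graph $B$ having partite sets $S$ and $T$, and matching $M_\ell=\{e_1,\dots,e_\ell\}$. Write $e_\ell = uv$. The idea is to show that any $k$-coloring $c$ of $G$ is K-equivalent to a $k$-coloring in which one of the matching edges, say $e_\ell$, receives a prescribed ordered pair of colors, e.g.\ $c(u)=1$ and $c(v)=2$; once all $k$-colorings can be normalized to agree on $e_\ell$, we delete $e_\ell$, obtaining a $B+M_{\ell-1}$ graph to which the induction hypothesis applies, and then we only have to check that K-changes in $G-e_\ell$ that we are allowed to perform do not create a conflict on $e_\ell$ — or rather, that two colorings of $G$ agreeing on $e_\ell$ are K-equivalent in $G-e_\ell$ via a sequence that can be realized inside $G$. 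The latter needs care: a Kempe change in $G-e_\ell$ involving colors $\{1,2\}$ might split an $(1,2)$-component of $G$, so the reduction is not entirely free.

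The cleaner route, and the one I would actually carry out, is to normalize all of $S$ and $T$ simultaneously. By Observation~\ref{obs:1stset}, since $\ell<\binom k2$ there are colors $i,j$ with no $(i,j)$-edge in $M_\ell$ restricted to $S$ and none restricted to $T$; after K-changes we may assume color $i$ is absent from $S$ and color $j$ is absent from $T$ (the edges of $M_\ell$ inside $S$ use pairs other than $(i,j)$, and likewise inside $T$ — here is where the matching hypothesis, versus arbitrary $E_\ell$, makes the bookkeeping tractable, because each vertex of $S$ lies in at most one added edge). Then by Observation~\ref{obs:2ndset} we may further assume every component of $G[S]$ has a vertex of color $j$ and every component of $G[T]$ has a vertex of color $i$. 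Now I want to argue that from such a normalized coloring we can reach a single canonical coloring. The key structural point: with $i$ absent from $S$ and $j$ absent from $T$, the subgraph $G(i,j)$ consists only of isolated vertices and the edges of $M_\ell$ colored $(i,j)$ — but there are none of those — so $G(i,j)$ is edgeless, which gives a lot of freedom to recolor individual vertices between $i$ and $j$ to merge colors. Iterating this over the various color pairs available (using $\ell<\binom k2$ repeatedly) should let me collapse the coloring of $S$ to use only, say, colors $\{1,\dots,\text{something}\}$ in a controlled way, and similarly for $T$, until the interaction across $M_\ell$ is rigid enough to finish by hand.

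Concretely, the endgame I envision: reduce to a coloring where $S$ uses no color $i$ and $T$ uses no color $j$ and, by repeated edgeless-$(i',j')$ merges, where $S$ uses only colors in $\{1,2\}$ and $T$ uses only colors in $\{3,4\}$ — this is possible precisely because $k\ge 4$ — so that $G$ is 2-colored on each side and the matching edges (which lie within $S$ or within $T$) are now \emph{monochromatic-side} edges joining a color-$1$ to a color-$2$ vertex, or a color-$3$ to a color-$4$ vertex. But a matching edge inside $S$ joining two vertices cannot have both endpoints among $\{1,2\}$ unless they get different colors, which is automatic; so the matching edges impose no new obstruction, and what remains is essentially a $4$-coloring of a graph that is $2$-degenerate-like on each side — at which point Proposition~\ref{prop:dege}, or a direct argument, finishes. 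I expect the main obstacle to be the merging step: ensuring that after forcing $i\notin C(S)$ and $j\notin C(T)$ one really can push $S$ down to two colors and $T$ down to two colors without ever needing a color pair whose $(i',j')$-subgraph contains a matching edge — this is exactly where the hypothesis $\ell<\binom k2$ must be spent, and keeping track of which pairs are ``blocked'' by $M_\ell$ as the coloring changes is the delicate part. Once that combinatorial accounting is pinned down, the rest is routine application of Propositions~\ref{prop:bipar} and~\ref{prop:dege}.
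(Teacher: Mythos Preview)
Your overall target --- collapsing to a coloring with $C(S)\subseteq\{1,2\}$ and $C(T)\subseteq\{3,4\}$, then finishing via Lemma~\ref{lem:no_intersect} --- is exactly the paper's, and your endgame once you get there is sound. The gap is in how you get there. Your ``key structural point'' is false: with $i\notin C(S)$ and $j\notin C(T)$, the vertices of color $j$ all lie in $S$ and those of color $i$ all lie in $T$, so $G(i,j)$ is the bipartite subgraph of $B$ spanned by them --- typically full of edges, not edgeless. (What \emph{is} true is merely that $G(i,j)$ contains no edge of $M_\ell$.) An $(i,j)$-change on any nontrivial component therefore swaps a block of $S$-vertices to color $i$ and a block of $T$-vertices to color $j$ at once, wrecking both normalizations; you get no freedom to recolor individual vertices between $i$ and $j$. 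The merging step you correctly flag as ``the delicate part'' has, as written, no engine.

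The paper's mechanism exploits the matching hypothesis pointwise rather than through repeated color-pair counting. After arranging $k\notin C(S)$ and $1\notin C(T)$, use Lemma~\ref{lem:no_edge}(ii) to give every edge of $M_T$ an endpoint of color $k$: if neither endpoint currently has color $k$, then one endpoint has no neighbor of color $k$ at all (its unique $M$-neighbor in $T$ is the other endpoint, and $k\notin C(S)$), so it is a singleton in its $(\cdot,k)$-component. Now every $M_T$-edge is a $(c,k)$-edge for some $c\in\{2,\dots,k-1\}$. If some value, say $c=2$, is missed, Lemma~\ref{lem:no_edge}(i) clears $2$ from $T$; with $1,2\notin C(T)$, each vertex of $S$ with color $r\ge 3$ is a singleton in $G(1,r)$ or in $G(2,r)$ (its unique $M_S$-neighbor, if any, can block at most one of the two), so $C(S)$ collapses to $\{1,2\}$, and then symmetrically $C(T)$ collapses to $\{3,4\}$. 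If every $c\in\{2,\dots,k-1\}$ is hit, first remove all $(2,3)$-edges from $M_S$ by recoloring their color-$2$ endpoints to $1$ (singleton $(1,2)$-changes, since $1\notin C(T)$); now neither $M_S$ nor $M_T$ has a $(2,3)$-edge, Observation~\ref{obs:1stset} clears $2$ from $T$, and you are back in the previous case. The hypothesis $\ell<\binom{k}{2}$ is spent exactly once, at the outset, not repeatedly as you anticipate.
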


\begin{thm}\label{thm:c3e5}
Let $G$ be a $3$-colorable $B + E_{\ell}$ graph with $\ell \leq 5$.
Then $\Kc(G,4) = 1$.
\end{thm}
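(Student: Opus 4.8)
The plan is to fix one reference $3$-coloring $c^{\ast}$ of $G$ (available since $\chi(G)\le 3$) and to show that every $4$-coloring of $G$ is Kempe equivalent to $c^{\ast}$, in two stages: first reduce an arbitrary $4$-coloring to one using only three colors, and then show that any two $3$-colorings of $G$ lie in a single $\mathrm{K}^4$-class. Throughout, write $B$ for the based bipartite graph with parts $S,T$ and split the added edges as $E_{\ell}=E_S\sqcup E_T$ according to the part in which they lie; interchanging $S$ and $T$ if necessary we may assume $|E_S|\le 2$, so that the graph $(S,E_S)$ has no cycle and is therefore $2$-colorable. Note also that at most $2\ell\le 10$ vertices are incident with $E_{\ell}$, and every other vertex has all of its neighbours in the opposite part, so away from this bounded set $G$ behaves like $B$. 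Finally, if $G$ is $3$-degenerate we are done by Proposition~\ref{prop:dege} (with $d=3<4$), so by peeling off vertices of degree at most $3$ one at a time — each such vertex can afterwards be re-attached and recoloured, since it always has a free colour and the recolouring is itself a Kempe change — we may assume $\delta(G)\ge 4$.

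For the first stage, take any $4$-coloring $c$. Since $\ell<\binom{4}{2}$ there is, by pigeonhole, a pair of colours $\{i,j\}$ with no $(i,j)$-edge in $E_{\ell}$, so Observation~\ref{obs:1stset} moves $c$ to a coloring with $i\notin C(S)$ and $j\notin C(T)$, and Observation~\ref{obs:2ndset} then arranges in addition that every component of $G[S]$ contains a vertex of colour $j$ and every component of $G[T]$ a vertex of colour $i$. Relabelling, say $i=4$ and $j=3$, so that $S$ uses only $\{1,2,3\}$ and $T$ only $\{1,2,4\}$; in particular every colour-$3$ vertex lies in $S$, every colour-$4$ vertex in $T$, and neither an $E_S$-edge nor an $E_T$-edge is a $(3,4)$-edge. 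The crucial consequence is that every $(3,4)$-component is an ordinary connected subgraph of the bipartite graph $B$, so that $(3,4)$-changes are as flexible as in the bipartite case of Proposition~\ref{prop:bipar}. Using this, one recolours the remaining colour-$3$ (resp. colour-$4$) vertices off $S$ (resp. $T$) one Kempe component at a time until only three colours survive. This is carried out as a finite case analysis on the pair $(|E_S|,|E_T|)$ with $|E_S|\in\{0,1,2\}$ and $|E_S|+|E_T|\le 5$, and on the isomorphism type of $(T,E_T)$; the $3$-colorability of $G$ is what guarantees that the target of each local step is legal (for instance, a subconfiguration forcing a $K_4$ cannot occur).

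For the second stage we must connect two $3$-colorings $c_1,c_2$, now using the fourth colour only as scratch: a single vertex of a $3$-coloring may always be lifted to colour $4$ and back (its $(\cdot,4)$-component is a single vertex), and any global permutation of the colours is realizable, one component of $G(a,b)$ at a time, so it suffices to bring both $c_1$ and $c_2$ to a common normal form. The hard part will be exactly the cases in which $(T,E_T)$ contains an odd cycle — for example when $E_T$ is a triangle, a $C_5$, or a triangle together with up to two further edges — because then three colours are genuinely needed on $T$, the ``$(3,4)$ behaves bipartitely'' mechanism of the first stage breaks down, and the forced third colour on $T$ interacts with the edges between $S$ and $T$; one has to show not merely that a normal form exists but that it is reachable by Kempe changes, and then that the normal forms coming from different cases agree up to a colour permutation. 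I expect the single most delicate instance to be the extremal one with $|E_S|+|E_T|=5$, which includes the octahedron $K_{2,2,2}$ (itself a $3$-colorable $B+E_5$ graph): there every colour class is tightly constrained, and merging two of them requires a carefully ordered chain of Kempe changes that makes essential use of the spare colour.
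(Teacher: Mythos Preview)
Your proposal is an outline, not a proof. The two-stage plan --- reduce an arbitrary $4$-coloring to a normal form, then show any two normal forms are Kempe equivalent --- is close in spirit to what the paper does, but the entire substance of the theorem lies in the case analysis you defer. You say Stage~1 ``is carried out as a finite case analysis on the pair $(|E_S|,|E_T|)$'' and that in Stage~2 you ``expect the single most delicate instance to be'' the extremal one, yet neither analysis is performed. The paper's proof runs to several pages precisely because this work is intricate: at each step one must show that a particular Kempe change can be made without disturbing previously fixed colours, and the obstructions (forbidden odd wheels coming from $3$-colorability, specific adjacency patterns between $S$ and $T$) have to be tracked configuration by configuration.

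There is also a specific confusion in your Stage~1. You correctly observe that once $3\notin C(T)$ and $4\notin C(S)$, every $(3,4)$-component lives inside $B$; but a $(3,4)$-change on such a component merely swaps the two colours across the bipartition, placing $4$ into $S$ and $3$ into $T$ and undoing the structure you just arranged --- it does not eliminate a colour. Reaching a $3$-coloring requires instead sequences of $(1,3)$-, $(2,3)$-, $(1,4)$-, $(2,4)$-changes that avoid pushing $3$ or $4$ across parts, and that is exactly where the hard configuration work begins. The paper, for comparison, does \emph{not} aim for a $3$-coloring: its normal form is a ``standard $4$-coloring'' (with $C(S)\cap C(T)=\emptyset$ when $G[S]$ and $G[T]$ are both bipartite, and a more elaborate form when one of them contains a triangle), and it then separately proves that standard $4$-colorings are pairwise K-equivalent. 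Your reduction to $\delta(G)\ge 4$ is valid but does not shortcut any of this; as a side remark, the octahedron $K_{2,2,2}$ survives that reduction but is a $B+E_4$ graph, not $B+E_5$ (for any bipartition of a $4$-regular graph one has $|E_S|-|E_T|=2(|S|-|T|)$, so $\ell$ is even).
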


Theorem~\ref{thm:bm5} (resp., Theorem~\ref{thm:c3e5}) is best possible by the following result
(resp.,~by the $k=4$ case of Proposition~\ref{prop:general}).



\begin{prop}\label{prop:bm6exist3}
For any $k \geq 3$,
there are infinitely many $B + M_{\ell}$ graphs $G$ with $\ell = \binom{k}{2}$ and $\Kc(G,k) \geq 2$.
\end{prop}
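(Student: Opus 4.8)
The plan is, for each fixed $k\ge 3$, to build one $B+M_{\binom{k}{2}}$ graph $G=G_k$ with $\Kc(G_k,k)\ge 2$ and then to spawn infinitely many examples from it by taking disjoint unions with bipartite graphs. Morally, $G_k$ is $K_k$ with each clique edge replaced by a length-$3$ path through a matching edge, plus some extra bipartite edges put in to control the Kempe dynamics.

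Concretely, $G_k$ has $k$ \emph{hubs} $h_1,\dots,h_k$ together with vertices $z_{i,j}$ for every ordered pair $(i,j)$ with $1\le i\ne j\le k$. Set $S=\{z_{i,j}\}$, $T=\{h_1,\dots,h_k\}$, let $B$ be the bipartite graph on $S\cup T$ with edge set $\{h_m z_{a,b}\with m\ne a\}$, and let $M=\{z_{i,j}z_{j,i}\with 1\le i<j\le k\}$. Then $M$ is a matching of size $\binom{k}{2}$ inside $S$, edge-disjoint from $B$, so $G_k=B+M$ is a $B+M_{\binom{k}{2}}$ graph with based bipartite graph $B$. I would then consider two colourings: $c_1(h_i)=i$, $c_1(z_{a,b})=a$; and $c_2(h_m)=k$ for all $m$, with $c_2(z_{i,j})=1$ if $i<j$ and $c_2(z_{i,j})=2$ otherwise. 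Both are proper ($c_1$: a matching edge $z_{i,j}z_{j,i}$ gets colours $i\ne j$ and a hub edge $h_m z_{a,b}$ gets colours $m\ne a$; $c_2$: $k\notin\{1,2\}$), and $c_1$ uses all $k$ colours.

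The heart of the proof is to show that, with respect to $c_1$, the bicoloured subgraph $G_k(i,j)$ is connected for every pair $i\ne j$. Its vertex set is $\{h_i,h_j\}\cup\{z_{i,b}\with b\ne i\}\cup\{z_{j,b}\with b\ne j\}$; the only matching edge surviving inside it is $z_{i,j}z_{j,i}$; and the surviving hub edges are exactly those joining $h_j$ to every $z_{i,b}$ and $h_i$ to every $z_{j,b}$. Hence every $z_{i,b}$ hangs off $h_j$, every $z_{j,b}$ hangs off $h_i$, and $h_i,h_j$ are joined through the path $h_i\,z_{j,i}\,z_{i,j}\,h_j$, so $G_k(i,j)$ is connected. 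Granting this, I invoke the standard observation that a Kempe change on a connected $(i,j)$-subgraph is nothing but the global transposition of the colours $i$ and $j$, an operation that preserves the property ``all $k$ colours are used and every $G(i,j)$ is connected''. Consequently the Kempe class of $c_1$ is exactly $\{\sigma\circ c_1\with\sigma\in S_k\}$, and every member of it assigns $k$ distinct colours to the $k$ hubs. Since $c_2$ gives all $k\ge 3$ hubs the same colour, $c_2$ is not in this class, so $c_1\not\sim_k c_2$ and $\Kc(G_k,k)\ge 2$.

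For the ``infinitely many'' clause I would take $G_{k,t}=G_k\sqcup tK_2$ for $t\ge 1$; absorbing the $t$ added edges into the based bipartite graph keeps it a $B+M_{\binom{k}{2}}$ graph, and the $G_{k,t}$ are pairwise non-isomorphic. Extend $c_1,c_2$ by colouring each added edge with colours $1,2$. Since $G_k$ is a union of connected components of $G_{k,t}$, every Kempe change of $G_{k,t}$ either restricts to a Kempe change of $G_k$ on $V(G_k)$ or leaves $V(G_k)$ untouched; hence a Kempe equivalence of the two extensions inside $G_{k,t}$ would project to one between $c_1$ and $c_2$ inside $G_k$, which we have ruled out, so $\Kc(G_{k,t},k)\ge 2$. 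I expect the main obstacle to be exactly the connectivity computation for $G_k(i,j)$: keeping straight which vertices carry the colours $i$ and $j$ and which of the many hub and matching edges remain, and pairing it with a precise statement of the ``global transposition'' lemma, which needs $c_1$ to use all $k$ colours.
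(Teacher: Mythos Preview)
Your proof is correct and follows essentially the same approach as the paper. The graph $G_k$ you construct is precisely the graph in the paper's Figure~\ref{fig:c3m6} (up to relabelling of colours in $c_2$), and your direct ``global transposition'' argument is the content of Lemma~\ref{lem:no_change}; the only cosmetic difference is that you generate infinitely many examples by adjoining disjoint $K_2$'s, whereas the paper adds isolated vertices to $S$ or $T$.
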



\smallskip
\noindent
\underline{General case}

If the answer of Conjecture~\ref{conj:main} is yes,
then the statement is best possible by the following proposition.

\begin{prop}\label{prop:general}
For any integer $k \geq 4$, the following hold.
\begin{enumerate}
\item[(i)] There are infinitely many 
$(k-1)$-colorable $B + E_{\ell}$ graphs $G$ with $\ell = \binom{k}{2}$ and $\Kc(G,k) \geq 2$. 
\item[(ii)] There are infinitely many
$k$-chromatic $B + E_{\ell}$ graphs $G$ with $\ell = \binom{k}{2} - 1$ and $\Kc(G,k) \geq 2$.
\end{enumerate}
\end{prop}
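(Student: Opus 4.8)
The plan is to construct explicit families of graphs realizing the two bounds, using the odd wheel (the smallest $k$-critical-type obstruction available once we pass $K_4$) as the non-bipartite core and gluing on enough bipartite structure to make the graph ``almost bipartite'' in the required sense while forcing at least two K-classes. For part~(i), I would start from a graph $H$ that is $(k-1)$-colorable, is a $B+E_{\ell}$ graph with $\ell=\binom{k}{2}$, and admits a $k$-coloring $c$ in which every pair of colors $\{i,j\}$ induces a \emph{connected} subgraph $H(i,j)$ that moreover is \emph{forced} to stay connected: the natural candidate is to take the based bipartite graph $B$ to be a complete bipartite graph $K_{m,m}$ with $m$ large, and to let $E_{\ell}$ be a set of $\binom{k}{2}$ edges within one partite set arranged so that, in the coloring $c$, each color pair is joined by one of these added edges. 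Because an added edge inside a partite set can never be recolored away (its two endpoints lie in the same side, and in $c$ they carry the two colors $i,j$ with no third color available to break the $(i,j)$-component at that spot), the coloring $c$ is ``rigid'' in the sense that $c$ lies in a singleton K-class, or at least in a K-class not containing some other, genuinely different coloring $c'$ obtained from a $(k-1)$-coloring of $G$ by leaving one color unused. The infinitude is obtained by letting $m\to\infty$, or more cheaply by attaching arbitrarily long bipartite pendant paths, which changes neither $\ell$ nor the relevant colorings.

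For part~(ii) the target is a $k$-\emph{chromatic} (not just $(k-1)$-colorable) $B+E_{\ell}$ graph with $\ell=\binom{k}{2}-1$ and $\Kc(G,k)\ge 2$. Here I would build on a $k$-critical graph that is itself almost bipartite with exactly $\binom{k}{2}-1$ edges inside the partite sets; iterated odd wheels (or a suitable book of odd cycles sharing a hub) give $\chi=k$ for the right parameters, and one checks that the edges forced to lie inside a single side of the natural bipartition number exactly $\binom{k}{2}-1$. Since $|E_{\ell}|<\binom{k}{2}$, Observations~\ref{obs:1stset} and~\ref{obs:2ndset} apply and give us a lot of freedom to move a coloring into ``normal form,'' but the point of the proposition is the \emph{negative} direction: we exhibit two $k$-colorings that cannot be joined. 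The cleanest way is to identify a color pair $\{i,j\}$ for which $G(i,j)$ is connected and contains every vertex of one of the added edges together with an odd cycle in $E_{\ell}\cup E(B)$; then no $(i,j)$-change is available that is nontrivial, and a parity/invariant argument on how the remaining colors sit on the odd structure separates a ``clockwise'' coloring from a ``counterclockwise'' one.

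The key steps, in order, are: (1) specify the based bipartite graph $B$ and the added edge set $E_{\ell}$ for each of the two constructions, and verify the chromatic number claims ($(k-1)$-colorable in (i), $k$-chromatic in (ii)); (2) exhibit for each construction two $k$-colorings $c_1,c_2$ that are visibly different; (3) define a K-change invariant --- most plausibly a parity of a sum, over the added edges or over a distinguished odd cycle, of something like the cyclic order of colors --- and check it is preserved by every Kempe change, using that an added edge inside a partite set can never be an $(i,j)$-edge of a \emph{disconnected} $G(i,j)$; (4) observe $c_1,c_2$ have different invariant, hence $\Kc(G,k)\ge 2$; (5) produce infinitely many examples by a bipartite blow-up or pendant-path trick that leaves $\ell$, $\chi$, and the invariant untouched.

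The main obstacle is step~(3): finding the right K-change invariant. A Kempe change can be quite global --- it recolors an entire connected two-colored component --- so an invariant built naively from local data at the added edges need not survive. The delicate point is that when we switch colors $i$ and $j$ on a component $D$, the added edges with both endpoints in $D$ are unaffected (they just swap $i\leftrightarrow j$), those with no endpoint in $D$ are untouched, and the subtle case is an added edge with exactly one endpoint in $D$ --- but such an edge cannot be an $(i,j)$-edge, so its two colors are not $\{i,j\}$, and at most one of them is changed. Controlling this ``boundary'' contribution, and showing the parity I want is genuinely invariant rather than merely invariant under changes avoiding the odd core, is where the real work lies; I expect to need $\ell$ to be exactly one short of $\binom{k}{2}$ precisely so that one color pair is ``missing'' an added edge and hence its component is the only one that can be split, which pins down which changes are available and makes the invariant well-defined.
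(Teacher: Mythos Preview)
Your plan heads in a reasonable direction for part~(i) --- building on a complete bipartite graph and placing $\binom{k}{2}$ added edges so that a particular $k$-coloring $c$ realises every color pair on some added edge --- but the mechanism you propose for separating $c$ from another coloring is both wrong in its details and unnecessarily elaborate. The sentence ``an added edge inside a partite set can never be recolored away \ldots\ with no third color available to break the $(i,j)$-component at that spot'' is not correct: an $(i,j)$-edge $uv$ can certainly become, say, an $(i,m)$-edge after an $(j,m)$-change touches $v$. What actually makes $c$ rigid is much simpler and is the content of Lemma~\ref{lem:no_change}: if \emph{every} $G(i,j)$ is connected (equal to the whole graph), then the only Kempe changes available are global swaps of two colors, so the entire K$^k$-class of $c$ consists of the $k!$ permutations of $c$. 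Any second coloring $c'$ in which some $G(i,j)$ is disconnected --- for instance, a $(k-1)$-coloring of $G$, which exists by construction --- is therefore automatically in a different class. No parity invariant is needed, and your step~(3), which you rightly flag as the obstacle, simply disappears once you use this observation. The paper's construction for~(i) arranges the added edges as $K_{k-2}\cup(2k-3)K_2$ inside $S$ and then deletes from the complete bipartite graph exactly the $S$--$T$ edges whose endpoints share a color, precisely so that each $G(i,j)$ becomes connected.

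For part~(ii) your proposal is too vague to succeed as stated: ``iterated odd wheels'' is not a construction, and you have not exhibited a graph that is simultaneously $k$-chromatic, a $B+E_\ell$ graph with $\ell=\binom{k}{2}-1$, and admits a $k$-coloring with all $G(i,j)$ connected. The paper's construction is quite different from what you sketch: it places a $K_{k-1}$ inside $S$ together with $k-3$ further pendant-type edges in $S$ and a single edge in $T$, on a base bipartite graph with $|T|=3$, and then again deletes a couple of $S$--$T$ edges so that a specific $k$-coloring has every $G(i,j)$ connected. The $K_{k-1}$ in $S$ plus the single edge in $T$ force $\chi=k$, and the same Lemma~\ref{lem:no_change} argument separates this coloring from another explicit $k$-coloring. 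Once you have Lemma~\ref{lem:no_change} in hand, both parts reduce to exhibiting one coloring with all two-color subgraphs connected and one coloring where this fails; the infinitude in both parts comes, as you note, from padding with isolated vertices.
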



We also give a partial positive solution of Conjecture~\ref{conj:main}, as follows.

\begin{thm}\label{thm:main}
Let $G$ be a $(k-1)$-colorable $B + E_{\ell}$ graph with $\ell < \binom{k}{2}$ and $k \geq 4$.
If every component $H$ induced by $\ell$ edges added is a path, 
a cycle of length at least~$4$ or a complete bipartite graph,
then $\Kc(G,k) = 1$.
\end{thm}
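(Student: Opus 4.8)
The plan is to show that every $k$-coloring of $G$ is Kempe equivalent to a single fixed coloring $c^\star$. We may assume $G$ is connected, since every added edge joins two vertices of the same component of $G$, each component of $G$ again satisfies the hypotheses, and $\Kc(G,k)$ is the product of the numbers $\Kc(G',k)$ over the components $G'$ of $G$. Let $S,T$ be the partite sets of the based bipartite graph and put $G_S=G[S]$, $G_T=G[T]$; as every added edge joins two vertices of one partite set, the nontrivial components of $G_S$ and $G_T$ together are exactly the components induced by $E_\ell$, so each is a path, a cycle of length $\geq 4$, or a complete bipartite graph. The key arithmetic fact is $\chi(G_S)+\chi(G_T)\le k$: among the allowed components only odd cycles have chromatic number $3$, and an odd cycle has length $\geq 5$ (triangles being excluded) so uses at least $5$ edges; running through the finitely many possibilities for $(\chi(G_S),\chi(G_T))\in\{1,2,3\}^2$ and using $\ell<\binom k2$ verifies the bound in every case. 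Fix a partition $\{1,\dots,k\}=A\sqcup B$ with $|A|=\chi(G_S)$ and $|B|=\chi(G_T)$, and fix one proper coloring $c^\star$ of $G$ with $c^\star(S)\subseteq A$ and $c^\star(T)\subseteq B$ (which exists because $A\cap B=\emptyset$), chosen canonically on each added component. Call a coloring \emph{split} when $c(S)\subseteq A$ and $c(T)\subseteq B$.

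The easier half is that every split coloring is Kempe equivalent to $c^\star$. If $c$ is split, then any Kempe change of $G$ using two colors of $A$ decomposes into independent changes on the components of $G_S$ and on the isolated vertices of $S$, because the $T$-neighbors of $S$ avoid $A$ and distinct components of $G_S$ are non-adjacent in $G$; so these changes realize exactly the Kempe changes of $G_S$ with palette $A$, and similarly on the $T$-side. It therefore suffices that $G_S$ is $|A|$-mixing and $G_T$ is $|B|$-mixing. For $|A|=1$ this is trivial ($G_S$ is edgeless); for $|A|=2$ every component is connected bipartite and is $2$-mixing via a global color swap; for $|A|=3$ (forced, e.g., by an odd-cycle component) every component is $k$-mixing with $k=3$ — Proposition~\ref{prop:dege} handles paths and cycles (being $1$- or $2$-degenerate), and a complete bipartite component needs a short direct argument, since $K_{3,3}$ is not $2$-degenerate, using that each of its two sides is independent and hence recolorable vertex by vertex. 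Thus every split coloring reaches $c^\star$: first fix $S$ with $A$-changes, then fix $T$ with $B$-changes.

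The harder half, which is the main obstacle, is to transform an arbitrary $k$-coloring into a split one. The entry point is Observations~\ref{obs:1stset} and~\ref{obs:2ndset}: choosing a color pair $(i,j)$ realized by no added edge (possible since $\ell<\binom k2$), we may assume $i\notin c(S)$, $j\notin c(T)$, and that every component of $G_S$ (resp.\ $G_T$) carries a vertex of color $j$ (resp.\ $i$). From there one processes the added components of $G_S$ and $G_T$ one at a time, re-applying the observations with fresh color pairs as colors are freed up; the structural hypotheses are precisely what make one component tractable — a path can be stripped from an endpoint (its leaf has a single added neighbor), a cycle of length $\geq 4$ is $2$-degenerate, and in a complete bipartite component each side is an independent set whose vertices can be treated individually — and the inequality $|B|=k-\chi(G_S)\geq\chi(G_T)$ always leaves enough unused color in $T$ to absorb what is displaced from $S$ while a $G_S$-component is recolored. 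The genuine difficulty is that a Kempe change on two colors may travel along the bipartite skeleton $B$ and link a $G_S$-component to parts of $T$ and beyond, so one cannot simply push a color across the $S$--$T$ cut vertex by vertex as for bipartite graphs (Proposition~\ref{prop:bipar}); the tightest case is $k\in\{4,5\}$ with edges on both sides, where a $3$-color palette cannot be afforded for both $S$ and $T$, but then $\ell<\binom k2$ forces at most one side to have chromatic number $3$ and the other, being bipartite, can be pushed into a $2$-color palette by a direct argument in the spirit of Proposition~\ref{prop:bipar}.
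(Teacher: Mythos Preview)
Your framework is sound and the ``easier half'' is correct; it is essentially Lemma~\ref{lem:no_intersect} of the paper. The arithmetic observation $\chi(G_S)+\chi(G_T)\le k$ is also correct and pleasant. (There is a minor inconsistency: you first write $|B|=\chi(G_T)$, then later $|B|=k-\chi(G_S)$; the second is what you mean and what makes $A\sqcup B=\{1,\dots,k\}$.)

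The genuine gap is the ``harder half''. You correctly identify the obstacle---a Kempe component may leak through the bipartite skeleton $B$ and entangle a $G_S$-component with $T$ and with other $G_S$-components---but you do not overcome it. Phrases like ``process the added components one at a time'' and ``the structural hypotheses are precisely what make one component tractable'' are not arguments; in particular, re-applying Observations~\ref{obs:1stset}--\ref{obs:2ndset} ``with fresh color pairs'' is unavailable in the extremal case $\ell=\binom{k}{2}-1$, where only one pair of colors is missing among the added edges. The local facts you cite (a leaf of a path has one added neighbor, a cycle is $2$-degenerate, one side of a complete bipartite component is independent) control behavior inside $G_S$ or $G_T$, but say nothing about the $(i,j)$-chains running through $B$, which is exactly where the difficulty lies.

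The paper's route to this step is quite different and does real work. It proceeds by induction on $k$: after arranging $k\notin C(S)$, $1\notin C(T)$ and maximizing $|V_1|$, $|V_k|$ (via Observations~\ref{obs:1stset}--\ref{obs:2ndset} and Lemma~\ref{lem:no_edge}), one deletes the color class $V_k\subseteq T$. The structural hypothesis guarantees that at least $\tfrac{2}{3}$ of the edges of each $G[T]$-component are removed, so the resulting graph $G'$ has fewer than $\binom{k-1}{2}$ added edges and again satisfies the component hypothesis, allowing the inductive hypothesis to produce a split coloring of $G'$, which lifts back to $G$. This reduction only closes for $k\ge 7$; the base cases $k\in\{4,5,6\}$ are handled separately by Theorem~\ref{thm:c3e5}, Proposition~\ref{prop:5colors}, and a short direct argument, and these base cases (especially $k=4$) involve substantial case analysis. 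Your sketch contains no substitute for either the inductive reduction or the base-case work.
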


\begin{rem}
In fact,
we could prove Theorem~\ref{thm:main} 
even if the condition ``a cycle of length at least~$4$" is replaced with ``a cycle".
However, 
we prove the theorem with a cycle of length at least~$4$ in this paper
because the proof concerning a triangle is tedious (see Remark~\ref{rem:5-3}).
\end{rem}

\smallskip
\noindent
\underline{A special case}

A graph $G$ is {\it $k$-critical} (or {\it edge $k$-critical}\/) 
if $\chi(G) = k$ and every proper subgraph of $G$ is $(k-1)$-colorable.
It is easy to see that for $k=1,2$ and $3$, 
$k$-critical graphs are $K_1$, $K_2$ and odd cycles, respectively.
So, it is worth to consider $k$-critical graphs for $k \geq 4$.

Chen et al.~\cite{chen1997class} gave a construction 
producing a $B+M_{\ell}$ graph from an arbitrarily given graph.
The construction consists of two steps (see Figure~\ref{fig:g_const}):
\begin{figure}[htb]
\centering 
\input{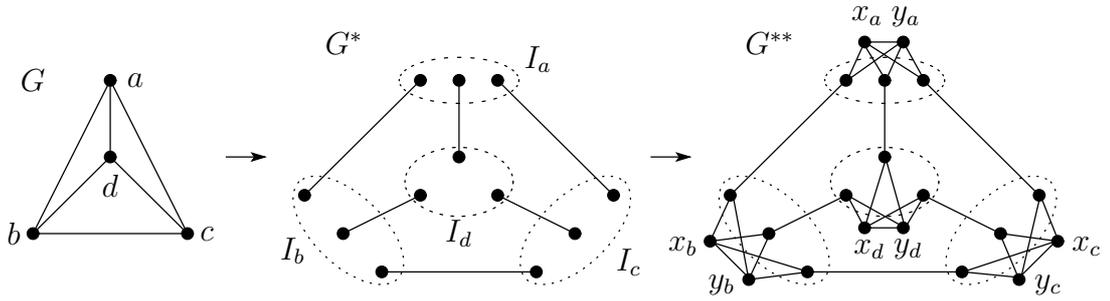}
\caption{Construction of $G^{**}$}
\label{fig:g_const}
\end{figure}

First, for each vertex $v$ of $G$, we construct an independent set $I_v$ of size $\deg_G(v)$
so that $I_v \cap I_u = \emptyset$ if $u \neq v$.
For each edge $uv$ of $G$, place an edge between $I_u$ and $I_v$ so that these edges are vertex disjoint.
Let $G^*$ be the resulting graph.
Note that $G^*$ consists of $|E(G)|$ disjoint edges.
Second, we add two new vertices $x_v$ and $y_v$ 
to each set $I_v$ joining them to each other and to all vertices of $I_v$.
We denote by $G^{**}$ the resulting graph. 
Note that $G^{**}$ is a connected 4-colorable $B + M_{\ell}$ graph 
and that it is 4-critical if $G$ is 4-critical~\cite{chen1997class}.
Moreover, since $G^{**}$ is 3-degenerate, 
the following holds by Proposition~\ref{prop:dege}.

\begin{cor}\label{thm:4cri}
For every connected graph $G$, $\Kc(G^{**},4)=1$.
\end{cor}

This corollary is a partial solution for our conjecture described later (Conjecture~\ref{conj:4cri}).
Moreover, Corollary~\ref{thm:4cri} implies that if a $B + M_{\ell}$ graph is isomorphic to $G^{**}$ for some given graph $G$,
then we can arbitrarily increase $\ell$ in Theorem~\ref{thm:bm5},
though $\ell < 6$ in general by Theorem~\ref{thm:bm5}.

Lately, Feghali~\cite{feghali2021kempe} proved that for every 4-critical planar graph $G$, $\Kc(G,4) = 1$
(which is conjectured in~\cite{mohar2006kempe}).
Moreover, it is not known whether a 4-critical graph $G$ with $\Kc(G,4) \geq 2$ exists
though there are infinitely many 3-colorable graphs $H$ with $\Kc(H,4) \geq 2$ 
(by Propositions~\ref{prop:bm6exist3} and~\ref{prop:general}),
and from our various considerations,
we are strongly convinced that for every 4-critical $B+M_{\ell}$ graph $G$, $\Kc(G,4) = 1$.
Thus, we propose the following conjecture.

\begin{conj}\label{conj:4cri}
For every $4$-critical graph $G$, $\Kc(G,4) = 1$.
\end{conj}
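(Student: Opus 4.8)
One natural route toward Conjecture~\ref{conj:4cri} is induction on $n=|V(G)|$, with base case $G=K_4$. For $K_4$ the equality $\Kc(K_4,4)=1$ already follows from Proposition~\ref{prop:dege}, since $K_4$ is $3$-degenerate; in fact Proposition~\ref{prop:dege} settles every $3$-degenerate $4$-critical graph (in particular every odd wheel), while Theorem~\ref{thm:4cri} handles the family $G^{**}$ and Feghali's theorem~\cite{feghali2021kempe} handles $4$-critical planar graphs. So the real content lies with $4$-critical graphs that are not $3$-degenerate, namely those of minimum degree at least $4$. For the inductive step I would use the classical structure of $4$-critical graphs: such a $G$ is $2$-connected with no clique cutset and has $\delta(G)\ge 3$, and by Gallai's theorem every block of the subgraph induced by the degree-$3$ vertices is a complete graph or an odd cycle (with the complete blocks of order at most three, as $G\ne K_4$).

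Fix two $4$-colorings $c_1,c_2$ of $G$. A convenient first step is to normalize each of them, by Kempe changes, so that every color class is a maximal independent set; all four colors occur (because $\chi(G)=4$), and after normalization every vertex sees all three colors different from its own. Suppose first that $G$ has a vertex $v$ of degree $3$. Then exactly three colors appear on $N(v)$, so $v$ is color-forced, and any Kempe change avoiding the color $c(v)$ leaves $v$ untouched, so three colors remain free on the $3$-colorable graph $G-v$. The idea is to move to a strictly smaller $4$-critical graph $G'$ obtained from $G-v$ by a Haj\'os/Ore-type identification of two suitable vertices of $N(v)$ (the choice dictated by whether $v$ lies in a block that is a $K_2$, a triangle, or an odd cycle of the low-vertex subgraph), in such a way that $4$-colorings of $G$ project to $4$-colorings of $G'$ and, conversely, every Kempe change in $G'$ lifts to a sequence of Kempe changes in $G$ fixing $c(v)$; induction on $G'$ then closes this case. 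The delicate point is that $4$-critical graphs do not reduce to smaller $4$-critical graphs in any naive way, so both the construction of $G'$ and the lifting of Kempe moves must be set up with care.

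The main obstacle is the remaining case $\delta(G)\ge 4$: here there is no degree-$3$ vertex to reduce on and Proposition~\ref{prop:dege} gives nothing, the extremal instances being the $4$-Ore graphs built from $K_4$ by iterated Haj\'os and Ore constructions, which are the tight examples for the Kostochka--Yancey edge bound and are essentially irreducible. For these I would abandon vertex deletion in favor of a global normal form: show that every $4$-coloring is Kempe-equivalent to one that is extremal for a suitable potential (say, one whose sorted vector of color-class sizes is lexicographically largest), and then prove that any two extremal colorings are Kempe-equivalent, exploiting the rigidity of $4$-criticality---every color class is a maximal independent set, deleting any color class leaves a $3$-chromatic but tightly constrained graph, and every vertex meets all three other colors. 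Establishing Kempe-connectivity of the extremal colorings of an arbitrary non-degenerate $4$-critical graph is precisely the part that neither Theorem~\ref{thm:4cri} nor the planar result reaches, and I expect it to require a genuinely new idea---most plausibly one that propagates Kempe-equivalence recursively through the Haj\'os/Ore decomposition of $4$-critical graphs. That is where I expect the argument to stall, and it is the heart of the conjecture.
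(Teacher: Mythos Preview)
The statement is Conjecture~\ref{conj:4cri}, and the paper offers no proof of it; it is presented as an open problem, with Theorem~\ref{thm:4cri} (the $G^{**}$ family) and Feghali's planar result cited as partial evidence. There is therefore no paper proof to compare your proposal against.

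Your writeup is not a proof either, and you are explicit about this: you sketch an inductive route, handle the $3$-degenerate case by Proposition~\ref{prop:dege}, and then acknowledge that both the degree-$3$ reduction (constructing a smaller $4$-critical $G'$ and lifting Kempe moves back) and the $\delta(G)\ge 4$ case are where the argument stalls. That assessment matches the paper's own position in Section~\ref{sec:7}, which says a proof ``probably need[s] \ldots a new proof method'' and that the authors' arguments ``do not use the $4$-criticality of graphs.'' Two small technical remarks on your sketch: first, ``not $3$-degenerate'' is not the same as ``$\delta(G)\ge 4$,'' since $3$-degeneracy requires a low-degree vertex in every subgraph, so a $4$-critical $G$ with a single degree-$3$ vertex whose removal leaves a graph of minimum degree $\ge 4$ falls into neither of your two buckets; second, the Haj\'os/Ore identification you propose in the degree-$3$ case does not in general produce a $4$-critical graph, nor is there any known mechanism for lifting Kempe changes through such an identification, so that step is a genuine gap rather than just a delicate detail. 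None of this contradicts the paper, which leaves the conjecture open.
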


More generally, we also propose the following problem.

\begin{prob}\label{prob:critical}
Is it true that
for every $k$-critical graph $G$, $\Kc(G,k) = 1$?
\end{prob}

\subsection{Organization of the paper}

In the next section, we introduce several notations and lemmas,
and then we prove Propositions~\ref{prop:bm6exist3} and~\ref{prop:general} in Section~\ref{const}.
In Sections~\ref{sec:3},~\ref{sec:4} and~\ref{sec:5},
we prove Theorems~$\ref{thm:bm5}$,~$\ref{thm:c3e5}$ and~\ref{thm:main}, respectively.
In the final section, Section~\ref{sec:7}, we give a conclusion.

\section{Notations and Lemmas}\label{sec:2} 

\subsection{Notations}

For a graph $G$ and a subset $S \subseteq V(G)$, $G[S]$ denotes the subgraph induced by $S$.
For a vertex $v \in V(G)$, $N_G(v)$ denotes the set of neighbors of $v$.
For a $B+E_{\ell}$ graph $G$,
let $S$ and $T$ denote partite sets of a based bipartite graph $B$.
For a vertex subset $R$,
$C(R)$ denotes the set of colors used to vertices in $R$.
For the sake of simplicity,
let $E$ be the set of added $\ell$ edges
and let $E_S$ (resp., $E_T$) be the subset of $E$ 
in which each edge in the set joining two vertices in $S$ (resp., $T$).
If $E$ is a matching, i.e., $G$ is a $B+M_{\ell}$ graph,
then we denote $E,E_S$ and $E_T$ by $M,M_S$ and $M_T$, respectively.

In several figures referenced in the proofs,
we omit isolated vertices in $G[S]$ and $G[T]$ from those figures,
where $S$ and $T$ denote partite sets of a based bipartite graph $B$ of a $B+E_{\ell}$ (or $B+M_{\ell}$) graph $G$.
Moreover, bold lines in those figures denote edges in the edges added to a based bipartite graph.

\subsection{Lemmas}

We first prepare several lemmas to show our main results.
Throughout this subsection,
$G$ denotes a $k$-colorable $B+E_{\ell}$ graph and is already colored by $k$ colors.

\begin{lem}\label{lem:no_edge}
The following hold:
\begin{itemize}
\item[(i)]
If $i \notin C(S)$ and $E_T$ has no $(i,j)$-edge,
then the $k$-coloring of $G$ is Kempe equivalent to a $k$-coloring such that $j \notin C(T)$.
\item[(ii)]
If a vertex $v$ in $T$ with color $j$ is not adjacent to any vertex with $i$,
then the $k$-coloring $f$ of $G$ is Kempe equivalent to a $k$-coloring $g$ 
such that $j = f(v) \neq g(v) = i$ and $f(u) = g(u)$ for any other vertex $u \neq v$.
\end{itemize}
\end{lem}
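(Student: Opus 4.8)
The plan is to dispose of (ii) first, since the proof of (i) invokes it. For (ii), observe that $v$ has color $j$ and, since the coloring is proper, no neighbor of $v$ has color $j$; by hypothesis no neighbor of $v$ has color $i$ either, so $v$ is an isolated vertex of $G(i,j)$, i.e.\ $\{v\}$ is an $(i,j)$-component. Performing the $(i,j)$-change on this component recolors $v$ from $j$ to $i$ and leaves every other vertex untouched, which is exactly the coloring $g$ required.

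For (i), the main step is the structural observation that, under the two hypotheses, \emph{every edge of $G(i,j)$ lies in $B$}. Indeed, any added edge in $E_S$ joins two vertices of $S$; were it an $(i,j)$-edge, both endpoints would be colored from $\{i,j\}$, and since $i\notin C(S)$ both would in fact be colored $j$, which is impossible for a proper coloring. And $E_T$ has no $(i,j)$-edge by assumption. Hence no added edge is an $(i,j)$-edge, so every edge of $G(i,j)$ is a $B$-edge and therefore joins a vertex of $S$ to a vertex of $T$.

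Consequently each $(i,j)$-component $D$ that contains an edge is connected and bipartite with both parts $D\cap S$ and $D\cap T$ nonempty, and the coloring restricted to $D$ is a proper $2$-coloring; since $i\notin C(S)$, every vertex of $D\cap S$ has color $j$, and connectivity then forces every vertex of $D\cap T$ to have color $i$. Thus a vertex of $T$ of color $j$ cannot lie in such a $D$: it must be an isolated vertex of $G(i,j)$, hence has no neighbor of color $i$. Moreover any two vertices of $T$ of color $j$ are non-adjacent (again since the coloring is proper and such an edge would lie in $E_T$), so these vertices form an independent set. Now apply (ii) to each of them in turn; recoloring one such vertex to $i$ does not alter the color of any vertex lying in the neighborhood of another (as the two are non-adjacent), so the hypothesis of (ii) continues to hold for the vertices not yet processed. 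After finitely many such Kempe changes no vertex of $T$ has color $j$, that is, $j\notin C(T)$, as desired.

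The only point needing care is the bookkeeping in this last step, namely checking that each vertex to be recolored is still isolated in the current graph $G(i,j)$ when its turn comes; but this is immediate from the pairwise non-adjacency of the color-$j$ vertices of $T$. I do not expect a genuine obstacle here: all the content is in the observation that the hypotheses collapse $G(i,j)$ to a subgraph of $B$, after which the bipartite structure does the rest.
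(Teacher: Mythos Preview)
Your proof is correct and aligns with the paper's approach. For (ii) both you and the paper simply observe that $v$ is its own $(i,j)$-component and apply the corresponding Kempe change; for (i) the paper just cites Observation~\ref{obs:1stset} (noting that $i\notin C(S)$ forces $E_S$ to have no $(i,j)$-edge, so neither $S$ nor $T$ has one), whereas you unfold that observation explicitly and route the final recoloring through repeated applications of (ii) --- a cosmetic difference only.
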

\begin{proof}
(i) This immediately follows from Observation~\ref{obs:1stset}.\\
(ii) We clearly have the desired coloring by a $(i,j)$-change concerning $v$.
\end{proof}

\begin{lem}\label{lem:no_intersect}
Suppose $C(S) \cap C(T) = \emptyset$.
If each component in $G[E]$ is either a bipartite graph or an odd cycle,
then every two colorings of $S$ (or $T$) are K-equivalent.
\end{lem}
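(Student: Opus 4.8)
The plan is to keep the coloring of $T$ frozen and show that every proper coloring of $S$ that avoids the colors of $T$ can be driven, by Kempe changes that never disturb $T$, to one fixed ``canonical'' coloring of $S$; applying this to two given colorings and concatenating the two sequences then yields the equivalence. Write $A=C(T)$ and $B=\{1,\dots,k\}\setminus A$. Since $C(S)\cap C(T)=\emptyset$, every coloring under consideration uses on $S$ only colors from $B$, and $B\neq\emptyset$ because $S$ is nonempty. By symmetry it suffices to treat $S$; the case of $T$ is identical with $S$ and $T$ interchanged (here the hypothesis that every component of $G[E]$ lying in $T$ is bipartite or an odd cycle is what is used).

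The heart of the argument — and the step I expect to need the most careful bookkeeping — is a \emph{localization} claim: a Kempe change whose two colors both lie in $B$ cannot propagate outside a single component of $G[E]$. Indeed, in a $B+E_{\ell}$ graph the added edges lie inside $S$ or inside $T$, so $G[S]$ has edge set exactly $E_S$, each component of $G[E]$ lies wholly in $S$ or wholly in $T$, and every edge of $G$ leaving $S$ ends in $T$, whose vertices carry colors from $A$. Hence, for $i,j\in B$, the $(i,j)$-component of $G$ through a vertex $v\in S$ can neither reach $T$ nor cross from one $G[E]$-component to another; it is therefore either a single vertex of $S$ isolated in $G[E]$, or it lies inside one component $H$ of $G[E]$ with $H\subseteq S$ and in fact coincides with the corresponding $(i,j)$-component of $H$. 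Two consequences: (a) a vertex of $S$ isolated in $G[E]$ is its own Kempe component for any pair of colors in $B$, so it may be recolored to any color of $B$ by one Kempe change; and (b) any sequence of Kempe changes carried out inside a component $H\subseteq S$ of $G[E]$ using only colors from $B$ lifts verbatim to a sequence of Kempe changes of $G$ that leaves $T$, all other $G[E]$-components, and all isolated vertices of $S$ untouched.

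With this in place I would finish as follows. Fix a color $b_0\in B$ and, for each component $H\subseteq S$ of $G[E]$, a proper coloring $c_H^{*}$ of $H$ with colors from $B$; such $c_H^{*}$ exists since $H$ is bipartite (needing $2\le|B|$ colors) or an odd cycle (needing $3\le|B|$ colors), and these bounds are forced by the mere existence of a proper coloring of $S$ avoiding $A$. Let $c^{*}$ be the coloring of $S$ equal to $c_H^{*}$ on each such $H$ and equal to $b_0$ on every vertex of $S$ isolated in $G[E]$; combined with the frozen coloring of $T$ this is a proper $k$-coloring of $G$. Now take any admissible coloring $c$ of $S$: for each component $H\subseteq S$ of $G[E]$, transform $c|_H$ into $c_H^{*}$ by Kempe changes of $H$ using colors from $B$, which is possible by Proposition~\ref{prop:bipar} when $H$ is bipartite and by Proposition~\ref{prop:dege} (an odd cycle is $2$-degenerate and $|B|\ge 3$) when $H$ is an odd cycle, and lift each step to $G$ via (b); then recolor each isolated vertex of $S$ to $b_0$ via (a). Throughout, $C(S)$ stays inside $B$, so $G$ stays properly colored and $C(S)\cap C(T)=\emptyset$ persists at every stage. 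The outcome is $c^{*}$, independent of $c$, so any two admissible colorings of $S$ are Kempe equivalent. The only genuinely delicate point is the localization claim of the second paragraph — checking that a Kempe change confined to colors of $B$ truly cannot escape a single $E$-component — which is exactly where both hypotheses, $C(S)\cap C(T)=\emptyset$ and ``$B+E_{\ell}$ graph,'' are used.
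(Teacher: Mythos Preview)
Your proof is correct and follows essentially the same route as the paper: reduce to the components of $G[E]$ lying in $S$ and invoke Proposition~\ref{prop:bipar} for bipartite components and Proposition~\ref{prop:dege} for odd cycles. The paper's own proof is a one-sentence appeal to these two propositions; your version spells out the localization argument (why a Kempe change using only colors absent from $T$ is trapped inside a single $E$-component of $S$) that the paper leaves implicit, which is a genuine service to the reader but not a different idea.
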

\begin{proof}
By the assumptions, the lemma trivially holds by Propositions~\ref{prop:bipar} and~\ref{prop:dege},
that is, every two colorings of $G[S]$ using only colors in $C(S)$ are Kempe equivalent 
by using only colors in $C(S)$.
\end{proof}

\begin{lem}\label{lem:no_change}
Let $c$ and $c'$ be two distinct $k$-colorings of $G$,
and suppose that every K-component in $c$ is connected
and that there is a K-component $H$ in $c'$ which is not isomorphic to any K-component in $c$.
Then $c$ and $c'$ are not K-equivalent.
\end{lem}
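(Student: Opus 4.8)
The plan is to exploit how rigid the hypothesis on $c$ is. If every bicolored subgraph $G_c(i,j)$ (for colours $i\neq j$) is connected — which is the meaning of ``every K-component in $c$ is connected'', since a connected $G_c(i,j)$ is itself a single K-component — then the only Kempe changes available from $c$, or from anything reachable from it, are \emph{global swaps of two colour names}. Consequently the whole Kempe class of $c$ consists of colourings that differ from $c$ only by renaming colours, and such colourings have literally the same K-components, viewed as subgraphs of $G$, as $c$; so a ``new'' K-component $H$ cannot appear in $c'$.

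The key observation is the following. Suppose $d$ is a $k$-colouring with $G_d(i,j)$ connected for all $i\neq j$, and $d'$ is obtained from $d$ by one Kempe change, say on the colour pair $\{i,j\}$. Since $G_d(i,j)$ is connected it is its own unique connected component, so the change recolours every vertex of colour $i$ by $j$ and every vertex of colour $j$ by $i$, and fixes all other vertices; that is, $d'=\sigma\circ d$ where $\sigma$ is the transposition $(i\ j)$ acting on the colour set. Then $G_{d'}(a,b)=G_d(\sigma(a),\sigma(b))$, which is again connected for every pair $a\neq b$, so $d'$ has the same property as $d$. Iterating this and composing the transpositions produced at each step, a straightforward induction on the length of a Kempe sequence shows that every $k$-colouring Kempe equivalent to $c$ has the form $\sigma\circ c$ for some permutation $\sigma$ of the colours, and in particular has all of its bicolored subgraphs connected.

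To finish, I would suppose for contradiction that $c\sim_k c'$ and write $c'=\sigma\circ c$. For each pair $\{i,j\}$ one has $G_{c'}(i,j)=G_c(\sigma^{-1}(i),\sigma^{-1}(j))$, which is connected; hence the K-components of $c'$ are exactly the subgraphs $G_c(\sigma^{-1}(i),\sigma^{-1}(j))$, and as $\{i,j\}$ runs over all pairs these are precisely the K-components of $c$ (each $G_c(a,b)$ being connected, it is a single K-component). In particular the given $H$ equals, and so is isomorphic to, a K-component of $c$, contradicting the assumption on $H$; therefore $c\not\sim_k c'$. I do not anticipate a genuine obstacle: the only real content is the observation that from a colouring all of whose bicolored subgraphs are connected the sole available move is a global recolouring, and the rest is bookkeeping, modulo some harmless care for degenerate bicolored subgraphs (for instance when some colour is unused).
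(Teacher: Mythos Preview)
Your proof is correct and follows the same idea as the paper's: when every $G_c(i,j)$ is connected, each Kempe change from $c$ is a global swap of two colours, so the multiset of K-components (as subgraphs of $G$) is preserved along any Kempe sequence. The paper compresses this into a single sentence, while you spell out the induction and the bookkeeping explicitly; the underlying argument is identical.
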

\begin{proof}
It is clear that 
any sequence of K-changes cannot transform a K-component in $c$ into $H$,
since every K-component in $c$ is connected.
\end{proof}

Finally, we show the Kempe equivalence of 3-colorings of a graph with a few edges.

\begin{lem}\label{lem:5edges}
Let $H$ be a $3$-colorable graph with $|E(H)| \leq 5$.
Then every two $3$-colorings of $H$ are Kempe equivalent.
\end{lem}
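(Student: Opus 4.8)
The plan is to reduce the statement immediately to Proposition~\ref{prop:dege}. The key point is the following elementary observation: every graph $H$ with $|E(H)| \leq 5$ is $2$-degenerate. Indeed, if some subgraph $H' \subseteq H$ had minimum degree at least $3$, then $H'$ would have at least $4$ vertices, and by the handshake inequality $2|E(H')| = \sum_{v\in V(H')}\deg_{H'}(v) \geq 3|V(H')| \geq 12$, so $|E(H')| \geq 6$, contradicting $|E(H')| \leq |E(H)| \leq 5$. Hence no subgraph of $H$ has minimum degree $\geq 3$, that is, every subgraph of $H$ contains a vertex of degree at most $2$, so $H$ is $2$-degenerate.

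With this in hand, applying Proposition~\ref{prop:dege} with $d = 2$ and $k = 3$ --- legitimate since $k = 3 > 2 = d$ --- yields that every two $3$-colorings of $H$ are Kempe equivalent, which is exactly the claim. (Note that the hypothesis ``$H$ is $3$-colorable'' is in fact automatic once $|E(H)| \leq 5$, since $2$-degenerate graphs are $3$-colorable, but it does no harm to keep it.)

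There is essentially no obstacle in this argument; the only thing to verify is the degree count establishing $2$-degeneracy, which is routine (the extremal case being $K_4$, which has $6$ edges). If one prefers an argument that does not invoke Proposition~\ref{prop:dege}, one can proceed directly: first discard the isolated vertices of $H$, since an isolated vertex forms a K-component of $H(i,j)$ on its own for every pair of colors $i,j$ and hence can be recolored arbitrarily by a single Kempe change concerning it; this leaves at most $10$ vertices, and one can then transform any $3$-coloring into a fixed normal form by pushing colors along Kempe chains, using at each step the vertex of degree at most $2$ guaranteed above to free a color. The degeneracy route is cleaner, so I would present that.
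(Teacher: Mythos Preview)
Your proof is correct, and it is genuinely simpler than the paper's. The paper proceeds by case analysis: it first reduces to the connected $3$-chromatic case, then distinguishes between $H \cong C_5$ (handled via Proposition~\ref{prop:dege}) and the case where $H$ contains a triangle, and within the latter further separates the situation where two triangles share an edge (uniquely $3$-colorable) from the situation of a single triangle plus two pendant edges, finishing with ad hoc Kempe-change arguments. Your observation that $|E(H)|\le 5$ already forces $2$-degeneracy short-circuits all of this: Proposition~\ref{prop:dege} applies directly to the whole graph, with no case split needed. The paper's approach has the minor virtue of being more explicit about what the Kempe changes actually are in each small configuration, but your route is cleaner and loses nothing logically.
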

\begin{proof}
We may assume that $H$ is connected and 3-chromatic by Proposition~\ref{prop:bipar}. 
Then we have either (1) $H \cong C_5$ or (2) $H$ has a triangle.
In the case~(1), the lemma holds by Proposition~\ref{prop:dege}.
So we consider the case~(2).
If $H$ has two triangles sharing an edge,
then $H$ is {\it uniquely $3$-colorable}, that is, 
there is exactly one 3-coloring of $H$ up to permutation of the colors.
This trivially implies every two 3-colorings of $H$ are Kempe equivalent,
and hence, we may suppose that $H$ has exactly one triangle $xyz$.

Since $|E(H)| \leq 5$, there are exactly two vertices $u,v$ other than $x,y,z$.
There are three sub-cases by symmetry; $ux,vx \in E(G)$, $uv,ux \in E(G)$ or $ux,vy \in E(G)$.
Let $f$ and $g$ be distinct 3-colorings of $G$.
Note that every 3-coloring of $xyz$ can be changed to the same one as any other 3-coloring
by applying K-changes suitably.
Thus, we have $f(x)=g(x), f(y)=g(y)$ and $f(z)=g(z)$.
Moreover, in each sub-case,
if $f(u)\neq g(u)$ or $f(v) \neq g(v)$,
then we can easily see that 
the color $f(u)$ (resp., $f(v)$) of $u$ can be changed to $g(u)$ (resp., $g(v)$) 
by an $(f(u),g(u))$-change (resp., $(f(v),g(v))$-change) preserving the coloring of the triangle.
This completes the proof of the lemma.
\end{proof}

\section{Constructions}\label{const}

We first prove Proposition~$\ref{prop:bm6exist3}$.

\begin{proof}[Proof of Proposition~$\ref{prop:bm6exist3}$]
For any $k \geq 3$,
let $G$ be the graph shown in the left of Figure~\ref{fig:c3m6}.
The graph can be obtained from 
a complete bipartite graph $B$ with a matching of size~$\binom{k}{2}$ 
whose edges join two vertices in $S$ by removing edges of $B$ which join two vertices with the same color;
each pair of two colors in $\{1,2,\dots,k\}$ appears on some edge in $G[S]$
and each color in $\{1,2,\dots,k\}$ appears exactly once in $T$.
Note that $|S|=k(k-1)$ and $|T|=k$
and that $G$ has another $k$-coloring shown in the right of Figure~\ref{fig:c3m6} (using only three colors).
Since $G(i,j)$ is connected for any $i,j$ in the left of Figure~\ref{fig:c3m6}
and there is a K-component in the right coloring 
not isomorphic to any K-component in the left one,
these two $k$-colorings are not K-equivalent by Lemma~\ref{lem:no_change}.
Moreover, it is easy to see that the order of $G$ can be arbitrarily large 
by adding isolated vertices in $S$ or $T$ suitably. 
Therefore, this completes the proof of the proposition.
\end{proof}

\begin{figure}[htb]
\centering
\input{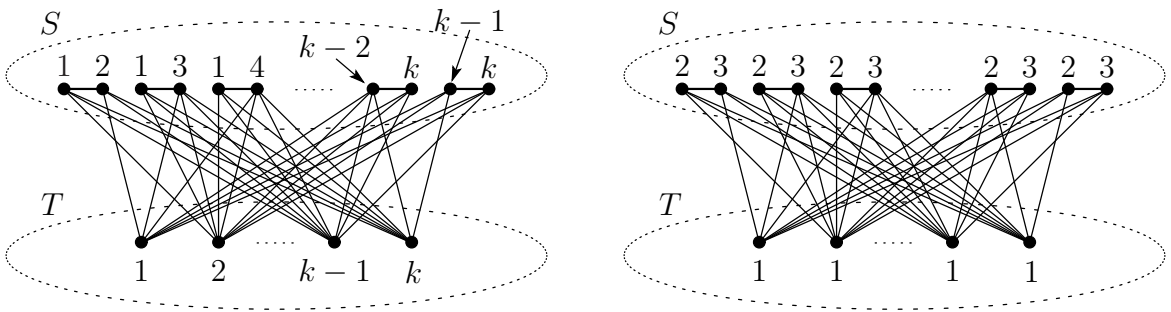}
\caption{A $B+M_{\ell}$ graph $G$ with $k \geq 3$, $\ell = \binom{k}{2}$ and $\Kc(G,k) \geq 2$}
\label{fig:c3m6}
\end{figure}

Next we prove Proposition~$\ref{prop:general}$.

\begin{proof}[Proof of Proposition~$\ref{prop:general}$]
We first construct the graph for $(i)$ shown in Figure~\ref{fig:g_k1}.
Prepare a complete bipartite graph $B$ with partite sets $S$ and $T$,
where $|S| = (k-2) + 2 \cdot (2k-3) = 5k - 8$ and $|T| = k$.
We add all of $\binom{k}{2}$ edges to $S$ so that 
those edges induce $K_{k-2} \cup (2k-3)K_2$,
and so $T$ consists of (exactly) $k$ isolated vertices. 
Then we give a $k$-coloring of the graph as shown in the left hand of Figure~\ref{fig:g_k1}:
We color vertices in $T$ by colors $1,2,\dots,k$,
$2k-3$ independent edges in $S$ by $(1,k-1), (2,k-1), \dots, (k-2,k-1), (1,k), (2,k), \dots, (k-1,k)$,
and $K_{k-2}$ by colors $1, 2, \dots, k-2$.
(This is possible since $k\geq 4$.)
According to the above coloring, 
we remove all edges between $S$ and $T$ joining two vertices with the same color.
The resulting graph is a $(k-1)$-colorable $B+E_{\ell}$ graph with $\ell = \binom{k}{2}$;
see the left hand of Figure~\ref{fig:g_k1}.
It is easy to check that every K-component of the coloring is connected,
but there is another $k$-coloring of the graph shown in the right of Figure~\ref{fig:g_k1},
and hence, 
those $k$-colorings are not K-equivalent by Lemma~\ref{lem:no_change}.
This completes the proof of $(i)$,
since the order of the above graph can be arbitrarily large 
as in the proof of Proposition~\ref{prop:bm6exist3}.

Next we construct the graph for $(ii)$ shown in Figure~\ref{fig:g_k2}.
Prepare a complete bipartite graph $B$ with partite sets $S$ and $T$,
where $|S| = k$ and $|T| = 3$.
We add an edge to $T$ joining two vertices of $T$,
$\binom{k-1}{2}$ edges to $S$ forming $K_{k-1}$,
and $k-3$ edges to $S$ joining the (unique) vertex $v$ not in the above $K_{k-1}$
and exactly $k-3$ vertices of the $K_{k-1}$.
(This is possible since $k\geq 4$.)
Similarly to the above case, 
we first give a $k$-coloring of the graph and then determine the based bipartite graph according to the given coloring
(see the left hand of Figure~\ref{fig:g_k2}):
Since $T$ contains exactly one edge $xy$ and an isolated vertex $z$,
we color $x$ (resp., $y,z$) by color $k-1$ (resp., $k$), respectively.
Then we color vertices of $K_{k-1}$ in $S$ by colors $1,2, \dots, k-1$
and $v$ by color $k-2$,
and hence, we set vertices in $S$ adjacent to $v$ be colored by colors $1,2, \dots, k-3$.
Finally, we remove two edges between $S$ and $T$; 
one of them joins $x$ and a vertex in $S$ with color $k-1$
and the other joins $y$ and a vertex with color $k-2$ which is not $v$.
The resulting graph is a $k$-chromatic $B+E_{\ell}$ graph with $\ell = \binom{k}{2} - 1$;
see the left of Figure~\ref{fig:g_k2}.
Every K-component of the coloring is connected, 
but there is another $k$-coloring of the graph shown in the right of Figure~\ref{fig:g_k2},
and hence those $k$-colorings are not K-equivalent by Lemma~\ref{lem:no_change}.
As in (i), the order of the constructed graph can be arbitrarily large,
and hence this completes the proof of $(ii)$.
\end{proof}

\begin{figure}[htb]
\centering
\input{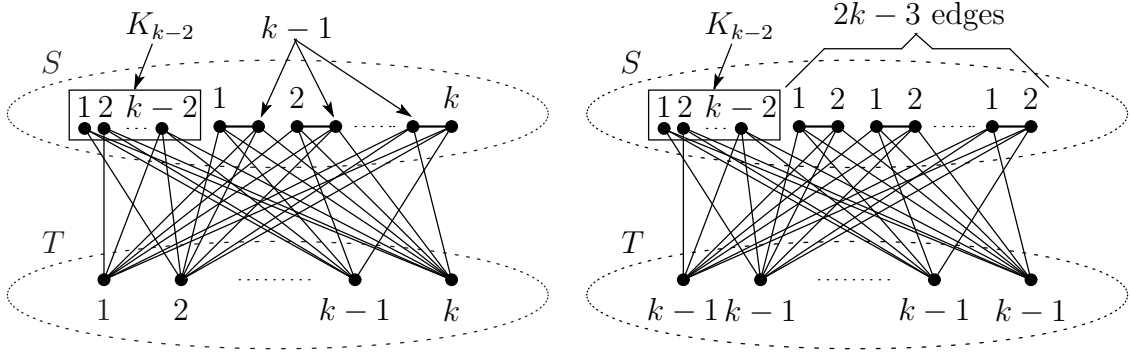}
\caption{Two non-K-equivalent $k$-colorings of a $(k-1)$-colorable $B+E_{\ell}$ graph
with $\ell = \binom{k}{2}$}
\label{fig:g_k1}
\end{figure}

\begin{figure}[htb]
\centering
\input{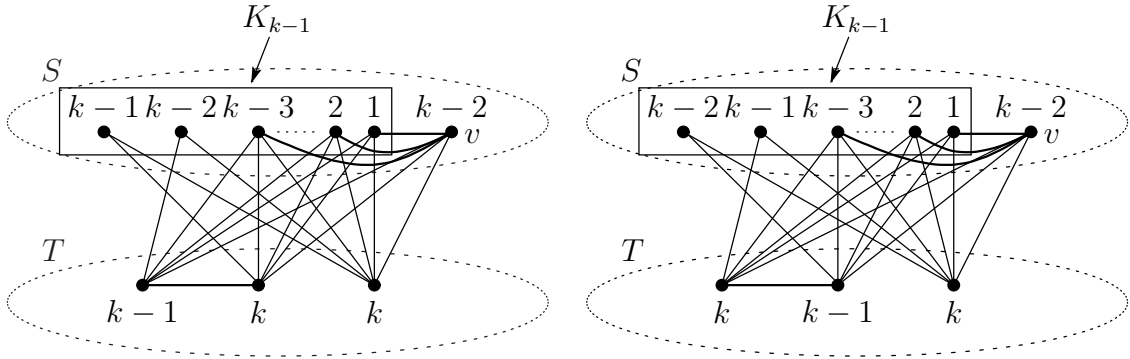}
\caption{Two non-K-equivalent $k$-colorings of a $k$-chromatic $B+E_{\ell}$ graph
with $\ell = \binom{k}{2} - 1$}
\label{fig:g_k2}
\end{figure}

\section{Proof of Theorem~$\ref{thm:bm5}$}\label{sec:3} 

Let $G$ be a $B+M$ graph with $k \geq 4$ and $|M| < \binom{k}{2}$.
Suppose that $G$ is colored by $k$ colors $1,2,\dots,k$.
In this proof, 
we show that every $k$-coloring of $G$ can be transformed into a 4-coloring of $G$ using colors in $\{1,2,3,4\}$
such that $C(S) \cap C(T) = \emptyset$.
This 4-coloring is called a {\it standard $4$-coloring} of $G$.
Note that every two standard 4-colorings are K-equivalent by Lemma~\ref{lem:no_intersect}.

We may assume that $G(1,k)$ contains no edge in $M$,
and so we have $C(S) \subseteq \{1,2,\dots,k-1\}$ 
and $C(T) \subseteq \{2,3,\dots,k\}$ by Observation~\ref{obs:1stset}.
By applying Lemma~\ref{lem:no_edge}(ii) repeatedly, 
we can make every edge in $G[T]$ have color~$k$.
We divide the remaining proof into two cases.

\medskip
\noindent
{\bf Case 1.} $M_T$ has no $(i,k)$-edge for some $i \in \{2,3,\dots,k-1\}$.

Without loss of generality, we set $i = 2$.
By Lemma~\ref{lem:no_edge}(i), we have $1,2 \notin C(T)$.  
Then we can obtain $C(S) \subseteq \{1,2\}$ by $(1,i)$- and  $(2,i)$-changes for $i \in \{3,\dots,k-1\}$.
Therefore, since we can also obtain $C(S) \subseteq \{3,4\}$ by Lemma~\ref{lem:no_edge}(ii) again,
the resulting coloring is a standard 4-coloring.

\medskip
\noindent
{\bf Case 2.} $M_T$ has an $(i,k)$-edge for every $i \in \{2,3,\dots,k-1\}$.

If $M_S$ contains a $(2,3)$-edge $uv$ where $u$ is colored with~2,
then the color of $u$ can be changed to color~$1$ by a $(1,2)$-change concerning $u$ since $1 \notin C(T)$.
Thus, we can make $M_S$ contain no $(2,3)$-edge by repeated application of such $(1,2)$-changes.
Moreover, 
since every edge in $G[T]$ has color $k$,
there is no $(2,3)$-edge in $M_T$, either.
Thus, by Observation~\ref{obs:1stset},
we can make $1,2 \notin C(T)$ by $(2,3)$-changes, 
and hence, this case can be deduced to Case~1.
\qed

\section{Proof of Theorem~$\ref{thm:c3e5}$}\label{sec:4} 

Let $G$ be a 3-colorable $B+E$ graph with $|E| \leq 5$.
Suppose that $G$ is colored by using all of four colors $\{1,2,3,4\}$.
Since $|E| \leq 5$,
there are two colors, say 3 and 4, such that $G(3,4)$ contains no edge in $E$.
By Observation~\ref{obs:1stset}, we may assume that $4 \notin C(S)$ and $3 \notin C(T)$,
and by Observation~\ref{obs:2ndset}, each isolated vertex in $S$ (resp., $T$) has color~3 (resp.,~4).
By this fact, if $E_S = \emptyset$ or $E_T = \emptyset$, say the latter, that is, $T$ is an independent set,
then we can easily see that every two 4-colorings of $G$ are K-equivalent by Lemma~\ref{lem:5edges}.
So we may assume by symmetry that $|E_S| \geq |E_T| \geq 1$,
and it suffices to consider the following two types;
(1) both $G[S]$ and $G[T]$ are bipartite and (2) $G[S]$ contains a triangle $xyz$, where $x,y,z \in V(G)$.
Then we show the following two claims,
where the desired 4-coloring in each of Claims~\ref{cl:thm_1} and~\ref{cl:thm_2} is called a standard 4-coloring.

The proof consists of the following three steps:
\begin{itemize}
\item[1.]
For the type (1), every 4-coloring can be transformed into a standard 4-coloring 
by Claim~\ref{cl:thm_1}.
\item[2.]
For the type (2), every 4-coloring can be transformed into a standard 4-coloring
by Claim~\ref{cl:thm_2}.
\item[3.]
For types (1) and (2), 
every two standard 4-colorings are K-equivalent 
by Lemma~\ref{lem:no_intersect} and Claim~\ref{cl:thm_3}, respectively.
\end{itemize}

\begin{cl}\label{cl:thm_1}
If both $G[S]$ and $G[T]$ are bipartite,
then every $4$-coloring of $G$ can be transformed into a $4$-coloring such that $C(S)=\{1,3\}$, $C(T)=\{2,4\}$ 
and all isolated vertices in $G[S]$ (resp., $G[T]$) are colored by color~$3$ (resp.,~$4$).
\end{cl}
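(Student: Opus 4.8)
The plan is to show that, starting from the $4$-coloring obtained after the preprocessing already carried out, one can reach a $4$-coloring with $C(S)\cap C(T)=\emptyset$; once this is done, Lemma~\ref{lem:no_intersect} (both $G[S]$ and $G[T]$ are bipartite) lets us recolor $S$ freely within $\{1,3\}$ and $T$ freely within $\{2,4\}$, and a final adjustment --- taking $C(S)=\{1,3\}$, $C(T)=\{2,4\}$ and, by Observation~\ref{obs:2ndset}, putting color~$3$ on the isolated vertices of $G[S]$ and color~$4$ on those of $G[T]$ --- yields the asserted standard $4$-coloring. Recall that after the preprocessing we already have $C(S)\subseteq\{1,2,3\}$ and $C(T)\subseteq\{1,2,4\}$, so $C(S)\cap C(T)\subseteq\{1,2\}$, and the only way an added edge can fail to respect a standard coloring is by joining a color-$1$ vertex to a color-$2$ vertex.

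The key step is a dichotomy. If no added edge joins a color-$1$ vertex to a color-$2$ vertex, then Observation~\ref{obs:1stset} applied to the pair $\{1,2\}$ produces a $4$-coloring with $2\notin C(S)$ and $1\notin C(T)$ simultaneously; the Kempe sequence it uses recolors only colors $1$ and $2$ and uses only these colors, so $4\notin C(S)$, $3\notin C(T)$ and the color-$3$ and color-$4$ isolated vertices are untouched. Since $|E_S|,|E_T|\ge1$ force $|C(S)|,|C(T)|\ge2$, the resulting coloring already has $C(S)=\{1,3\}$ and $C(T)=\{2,4\}$, \ie it is standard.

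So it suffices to reduce to that case, \ie to destroy every $(1,2)$-colored added edge. Consider such an edge $uv$ in $E_T$, with colors $1$ and $2$. As $4\notin C(S)$, a color-$4$ neighbour of $u$ or of $v$ must lie in $T$, hence on an edge of $E_T$; and $u,v$ cannot both have such a neighbour, for then $E_T$ would contain three edges (or a triangle, impossible since $G[T]$ is bipartite), contradicting $|E_T|\le 2$. Thus one of $u,v$, say $u$, has no color-$4$ neighbour at all, so a single Kempe change recolors $u$ to $4$, turning $uv$ into a non-$(1,2)$-edge and creating no new $(1,2)$-colored added edge. A $(1,2)$-colored edge $st$ of $E_S$ is treated the same way with color~$3$ in place of $4$ (using $3\notin C(T)$): if $s$ or $t$ has no color-$3$ neighbour, one local Kempe change removes the edge. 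The only leftover situation is that both endpoints of some $(1,2)$-edge $st$ of $E_S$ have a color-$3$ neighbour along $E_S$; then $E_S$ contains a path whose colours are $3,1,2,3$, so $|E_S|\ge 3$, and since $|E|\le 5$ at most two further added edges are present. In that bounded situation one finishes by a short explicit case analysis, \eg by performing a $(1,3)$-Kempe change so that $s$ receives color~$3$, and then re-entering the dichotomy above.

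The part that needs genuine care is exactly this last point: the case analysis must also show that the Kempe changes used to kill $(1,2)$-colored added edges eventually stop, \ie do not keep re-creating such edges in a cycle. This is where the hypothesis $|E|\le 5$ is essential, since it simultaneously bounds the number of added edges and severely restricts the configurations they can form; controlling the (potentially large) bipartite edge set $E(B)$ is never an issue here, because every Kempe change we invoke is either local at a vertex or provided directly by Observation~\ref{obs:1stset}.
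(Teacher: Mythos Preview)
Your overall strategy---kill every $(1,2)$-coloured added edge and then invoke Observation~\ref{obs:1stset} on the pair $\{1,2\}$---is different from the paper's and the easy half is clean: if no added edge is $(1,2)$-coloured, the $(1,2)$-Kempe changes supplied by Observation~\ref{obs:1stset} touch only colours $1$ and $2$, so the invariants $4\notin C(S)$, $3\notin C(T)$ and the colouring of the isolated vertices survive, and you land directly on the standard colouring. Your elimination of $(1,2)$-edges in $E_T$ is also correct, using $|E_T|\le 2$ and bipartiteness of $G[T]$.

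The genuine gap is the ``leftover situation'' in $E_S$, and your own caveat is on target. Suppose $st\in E_S$ with $f(s)=1$, $f(t)=2$, and both $s,t$ have colour-$3$ neighbours $p,q$ in $E_S$. Your suggestion is to perform a $(1,3)$-change at $s$. But the $(1,3)$-component of $s$ need not stay in $S$: from $p$ (colour~$3$, in $S$) the component can reach any colour-$1$ vertex of $T$ adjacent to $p$ along a bipartite edge. After the swap such a vertex acquires colour~$3$, destroying the invariant $3\notin C(T)$ on which your whole dichotomy rests; ``re-entering the dichotomy'' is then illegitimate, and redoing the preprocessing may undo the progress, so termination is not established. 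Saying ``a short explicit case analysis'' here understates what is needed.

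The paper avoids this trap by a different case split. In its Case~1 it first arranges $1\notin C(T)$ (via Lemma~\ref{lem:no_edge}(i), using that $E_T$ lacks a $(1,4)$-edge); once both $1,3\notin C(T)$, every $(1,3)$-component lies entirely in $S$, and bipartiteness of $G[S]$ is then used to recolour each colour-$2$ vertex of $S$ to~$3$. Its Case~2 (both a $(1,4)$- and a $(2,4)$-edge in $E_T$) is reduced to Case~1 by a structural argument that pins down the configuration of Figure~\ref{fig:type1p} and applies a specific four-step sequence. If you want to salvage your route, the missing ingredient is exactly this: you must first remove colour~$1$ (or~$2$) from $T$ before attempting any $(1,3)$-changes in $S$, and that is where the real case analysis (and the use of $G$ having no odd wheel) lives.
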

\begin{proof}[Proof of Claim~$\ref{cl:thm_1}$]
We divide the proof into two cases.

\smallskip
\noindent
{\bf Case 1.} $E_T$ has no $(1,4)$-edge (the case when it has no $(2,4)$-edge is similar).

By Lemma~\ref{lem:no_edge}(i), we have $C(T) = \{2,4\}$ since $4 \notin C(S)$.
Let $v$ be a vertex in $S$ with color~2 (if it does not exist, then we are done).
Since $G[S]$ is bipartite, 
two vertices adjacent to $v$ with colors~1 and~3 are not in the same K-component of $G(1,3)$;
otherwise, $G[S]$ has an odd cycle, a contradiction.
Thus, by a sequence of $(1,3)$-changes, all neighbors of $v$ in $S$ have color~1.
Hence we can change the color of $v$ to~3 by a $(2,3)$-change concerning $v$ since $3 \notin C(T)$.
By repeated applications of the above operation to every vertex in $S$ with color~2,
we have $C(S)=\{1,3\}$ and $C(T)=\{2,4\}$, that is, we can finally obtain a standard 4-coloring of $G$.

\smallskip
\noindent
{\bf Case 2.} $E_T$ has both a $(1,4)$-edge and a $(2,4)$-edge.

By this assumption, $|E_S| \geq |E_T|=2$.
Let $uv,u'v'$ be edges in $E_T$, where $u,v,u'$ and $v'$ have colors~2,~4,~1 and~4, respectively (possibly, $v = v'$);
see Figure~\ref{fig:type1p}.
(We show that $G$ will be the graph shown in Figure~\ref{fig:type1p}.)
\begin{figure}[htb]
\centering
\input{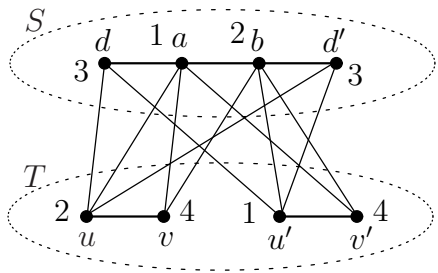}
\caption{The case when $E_T$ has both a $(1,4)$-edge and a $(2,4)$-edge}
\label{fig:type1p}
\end{figure}

In this case, our aim is to make $C(T)=\{2,4\}$ by K-changes;
if we attain this aim, then this case can be deduced to Case~1.
If $u'$ is not adjacent to a vertex in $S$ with color~2 or $S$ does not contain a $(1,2)$-edge,
then we are done by Lemma~\ref{lem:no_edge}.
Thus, $E_S$ has a $(1,2)$-edge $ab$ in which $a$ and $b$ have colors~1 and~2, and $ua,u'b \in E(G)$.

Observe that if the color of $a$ or $b$ can be replaced with color~3 without changing the color of other vertices,
then we are done by Lemma~\ref{lem:no_edge} as above.
Thus, we may assume by Lemma~\ref{lem:no_edge}(ii) that 
both $a$ and $b$ are adjacent to a vertex with color~3 in $G[S]$.
Let $d$ and $d'$ be vertices in $S$ with color~3 adjacent to $a$ and $b$, respectively.
Since $G[S]$ is bipartite, $d \neq d'$, and hence, $|E_S|=3$.
By similar argument, we have $ud,ud',u'd,u'd'\in E(G)$;
for example, if $ud \notin E(G)$, 
then we can make $a$ be adjacent to no vertex with color~3 by Lemma~\ref{lem:no_edge}(ii),
that is, this case can be deduced to the above.
Moreover, we also have $va,vb,v'a,v'b \in E(G)$ by Lemma~\ref{lem:no_edge}(ii),
as shown in Figure~\ref{fig:type1p};
for example, if $va\notin E(G)$,
then the color $v$ can be changed to color~1 by Lemma~\ref{lem:no_edge}(ii)
and hence the case can be deduced to Case~1 since $E_T$ has no $(2,4)$-edge.
Note that $au', bu, dv, dv', d'v, d'v' \notin E(G)$ since $G$ contains no odd wheel.

Observe that every isolated vertex in $S \cap (N_G(v) \cup N_G(v'))$ is not adjacent to $u$ or $u'$,
since otherwise $G$ contains an odd wheel.
Thus, we can make $C(T)=\{2,4\}$, as follows:
\begin{itemize}
\item[Step 1.] Replace the color of every isolated vertex adjacent to $v$ or $v'$ with color~1 or~2 by K-changes 
on $(1,3)$- or $(2,3)$-components consisting of only the isolated vertex.
\item[Step 2.] Change colors of $v,v'$ to color~3 by $(3,4)$-changes.
\item[Step 3.] Change colors of $u,u'$ to color~4 by $(2,4)$- and $(1,4)$-changes.
\item[Step 4.] Change colors of $v,v'$ to color~2 by $(2,3)$-changes,
and then restore the previously changed color of all isolated vertices in Step 1 (if necessary).
\end{itemize}
This completes the proof of the claim. 
\end{proof}

\begin{cl}\label{cl:thm_2}
If $G[S]$ contains a triangle $xyz$, where $x,y,z \in V(G)$,
then every $4$-coloring of $G$ can be transformed into a $4$-coloring such that
$C(S)=\{1,2,3\}$, $C(T) \subseteq \{1,2,4\}$, $(f(x),f(y),f(z)) = (1,2,3)$,
and all isolated vertices in $G[S]$ (resp., $G[T]$) are colored by color~$3$ (resp.,~$4$).
\end{cl}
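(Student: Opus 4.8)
The plan is as follows. By the reduction already performed above, we may assume that $C(S)\subseteq\{1,2,3\}$, that $C(T)\subseteq\{1,2,4\}$, that every isolated vertex of $G[S]$ has color~$3$, and that every isolated vertex of $G[T]$ has color~$4$. Since $G$ is $3$-colorable and $xyz$ is a triangle inside $S$, the colors of $x,y,z$ form a permutation of $\{1,2,3\}$; in particular $C(S)=\{1,2,3\}$ holds automatically, and $C(T)\subseteq\{1,2,4\}$ already gives $3\notin C(T)$. Thus the only thing that still has to be produced is the \emph{ordered} assignment $(f(x),f(y),f(z))=(1,2,3)$, achieved by K-changes that in the end preserve all of the conditions just listed.

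First I would record a structural observation used throughout: since $E_S$ contains the three edges of the triangle and $|E_S|\ge|E_T|\ge1$ with $|E_S|+|E_T|\le5$, we have $|E_S|\in\{3,4\}$ and $|E_T|\le2$; hence $G[S]$ is the triangle $xyz$ together with at most one more edge and some isolated vertices, and $G[T]$ carries at most two added edges. When the extra edge of $E_S$ is present I would first arrange, using Lemma~\ref{lem:no_edge}(ii) (or its evident analogue with the roles of $S$ and $T$ exchanged), that one of its endpoints carries color~$3$, or --- if that edge is a pendant of the triangle --- that its non-triangle endpoint avoids the colors that will matter below; since only one edge beyond the triangle has been added, this is a very restricted local situation.

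The heart of the argument is to permute the colors on $xyz$ to $(1,2,3)$, and here there are only two mechanisms. If the permutation to be realized is the transposition exchanging the two triangle vertices bearing colors~$1$ and~$2$, I would perform the $(1,2)$-change on the K-component containing one of them: it contains the other (they are adjacent) but not the third triangle vertex (color~$3$), and every vertex it recolors besides those two lies in $T$ (its color stays in $\{1,2\}\subseteq\{1,2,4\}$) or on the already-handled extra edge, so all side conditions survive. In every other nontrivial case, let $p\in\{x,y,z\}$ be the vertex currently colored~$3$; I would \emph{park} $p$ at color~$4$ by a singleton $(3,4)$-change, which is legitimate once $p$ has no neighbor of color~$4$ --- arranged by first moving the color-$4$ neighbors of $p$ (all of which lie in $T$) off color~$4$, each such step being a singleton K-change, possibly after clearing obstructions of its own. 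With the edges $px$ and $py$ effectively removed, the other two triangle vertices span a single edge colored inside $\{1,2\}$, and since their only remaining $S$-neighbor is $p$ (color~$4$), they can be driven to their targets --- recoloring first the one whose target is~$3$ --- by singleton K-changes; afterwards $p$ is returned from~$4$ to its own target by a further singleton change (after clearing that color from $N_T(p)$ if needed), and every vertex of $T$ and every isolated vertex of $G[S]$ disturbed during the preparation is restored to a color compatible with the side conditions by a final round of singleton changes.

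The main obstacle --- and the reason the authors describe the triangle case as tedious --- is exactly this bookkeeping. A K-change used to reorder the triangle, or a preparatory move clearing a forbidden color from some $N_T(v)$, may temporarily give a vertex of $S$ color~$4$, a vertex of $T$ color~$3$, or an isolated vertex of $G[S]$ a color other than~$3$, and one must verify that every such violation can be repaired at the end; this works because $|E|\le5$ forces $G[S]$ to consist essentially of just the triangle and $G[T]$ to carry at most two added edges, so each color class of $S$ is a bounded, explicitly describable set. I expect the finished proof to organize this into a finite case analysis according to $|E_S|\in\{3,4\}$, to how the extra edge (if any) meets the triangle, and to which of the colors $1,2,4$ occur on $N_T(x)\cup N_T(y)\cup N_T(z)$ and on the neighborhood of that extra edge.
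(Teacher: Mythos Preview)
Your setup and structural constraints ($|E_S|\in\{3,4\}$, $|E_T|\le2$, reduction to ordering $x,y,z$) match the paper, and your handling of the $(1,2)$-transposition by a single $(1,2)$-change is exactly the paper's first step. Beyond that your strategy diverges: you propose to \emph{park} the color-$3$ triangle vertex $p$ at color~$4$, rearrange the remaining two, and restore. The paper instead reduces via $(1,2)$-changes to $(h(x),h(y),h(z))\in\{(1,3,2),(3,2,1)\}$ and then attempts the $(2,3)$-swap $R_{yz}$ on the two triangle vertices colored $2$ and $3$ while preserving the colors on $T$; the whole proof is a case analysis of the obstructions to $R_{yz}$, organized by which of the edges $ux,uy,vx,vz,vw,\ldots$ are present.

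The gap is that the hard work is precisely the case analysis you defer, and you are missing the structural lever the paper uses throughout it: since $G$ is $3$-colorable it contains no odd wheel, and this is invoked repeatedly to exclude edges (for instance $vy$, $uw'$, $xv$ in various sub-cases) that would otherwise block a step. Your clearing step already runs into this: an isolated $t\in T$ adjacent to all of $x,y,z$ is forced to color~$4$ and cannot be moved off it by a singleton change, so the $(3,4)$-change at $p$ is not a singleton and may cascade through many $S$- and $T$-vertices; you allow this as a ``temporary violation'' but give no mechanism to control or repair it. The paper does, in a couple of sub-cases, push a triangle vertex to color~$4$ (so parking is not alien to the argument), but each such move is justified by an explicit K-change sequence tailored to that sub-case via the no-odd-wheel exclusions. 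Without those ingredients your outline is plausible but not yet a proof.
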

\begin{proof}[Proof of Claim~$\ref{cl:thm_2}$]
Let $uv$ be an edge in $E_T$ and let $ww'$ be an edge in $E \setminus \{xy,yz,zx,uv\}$;
note that it suffices to show the case when $|E| = 5$.
Up to $(1,2)$-changes,
we may assume without loss of generality
that a 4-coloring $h$ of $G$ satisfies one of the following:
\begin{itemize}
\item[] (i) $(h(x),h(y),h(z)) = (1,3,2)$ \quad  or \quad (ii) $(h(x),h(y),h(z)) = (3,2,1)$.
\end{itemize}

In this proof, we consider the case (i) since the case (ii) is similar; the role of colors~1 and~2 is switched
(in the proof of (ii), we exchange the colors 1 and 3 of $x$ and $z$
instead of exchanging the colors 2 and 3 of $y$ and $z$ in the following proof).
If we can apply a $(2,3)$-change concerning $y$ and $z$ preserving colors of vertices in $T$,
then we are done,
where this K-change is denoted by {\it $R_{yz}$}.
So, we may suppose that $h(u)=2$ and $uy \in E(G)$.
If $u$ is adjacent to no vertex with color~4 (resp.,~1),
then we can change the color of $u$ to~4 (resp.,~1) 
by Lemma~\ref{lem:no_edge}(ii) and apply $R_{yz}$.
Hence, without loss of generality, we may suppose $h(v)=4$,
and we have one of the following (see Figure~\ref{fig:type2p}):
\begin{itemize}
\item[(1)] $ux \notin E(G)$, $ww' \in E_T$, $h(w) = 1$, $w' = u$ and $wz \in E(G)$.
\item[(2)] $ux \in E(G)$, $vz \notin E(G)$, $ww' \in E_S$, $vw \in E(G)$ (i.e., $w \neq z$), 
$h(w) = 2$ and $h(w') = 3$.
\item[(3)] $ux \in E(G)$ and $vz \in E(G)$.
\end{itemize}

\begin{figure}[htb]
\centering
\input{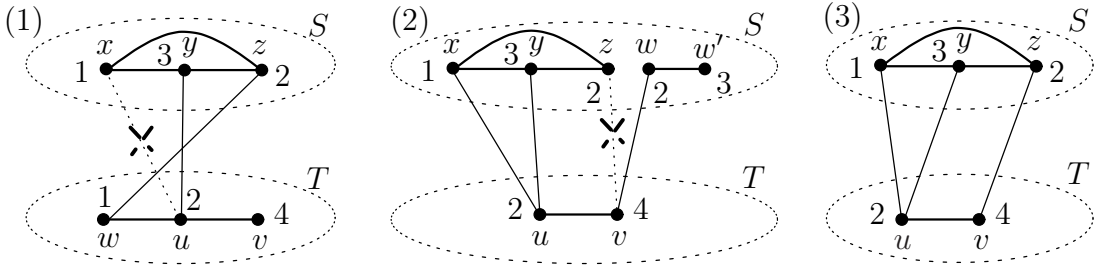}
\caption{Cases (1), (2) and (3) of the proof of Type 2}
\label{fig:type2p}
\end{figure}

Here we describe the reasons why the cases (1) and (2) occur
(note that the case (3) is naturally derived if neither (1) nor (2) occurs):
\begin{description}
\item[(1)]
Suppose $ux \notin E(G)$.
If $ww' \in E_S$,
then $h(w) = 1$, $uw \in E(G)$ and $w'=z$;
otherwise, we can change the color of $u$ to~1 without changing the colors of $x,z$.
However, in this case, 
we can change the color of $w$ to~3 by Lemma~\ref{lem:no_edge}(ii),
and hence, we can apply $R_{yz}$ after changing the color of $u$ to~1.
So $ww' \in E_T$, $h(w)=1$ and $w' = u$.

\item[(2)]
Suppose $ux \in E(G)$ but $vz \notin E(G)$.
If we can exchange colors of $u$ and $v$ by $(2,4)$-change 
without changing any color of vertices in $S$,
then we are done by $R_{yz}$ after that change.
So we have $vy \in E(G)$.
In this case, $xv \notin E(G)$ since $G$ has no odd wheel,
and hence, $ww' \in E_S$, $vw \in E(G)$, $h(w)=1$ and $h(w')=3$;
note that if $h(w') \neq 3$, then we can change the color of $w$ to~3 
by Lemma~\ref{lem:no_edge}(ii).
However, since $G$ has no odd wheel, $w$ or $w'$ is not adjacent to a vertex with color~2.
Therefore, after making $1\notin \{h(w),h(w')\}$ by several K-changes,
we can transform the color of $v$ and $u$ to~1 and~4
by $(1,4)$- and $(2,4)$-changes, respectively, and then apply $R_{yz}$.
By the above argument, 
we have $ww' \in E_S$, $vw \in E(G)$, $h(w)=2$ and $h(w')=3$;
the last condition is similarly derived as above.
\end{description}

Now we prove the three cases in order.
In the case (1), we can make $u$ adjacent to no vertex of color~1 
by a $(1,4)$-change concerning $w$ by Lemma~\ref{lem:no_edge}(ii),
and hence,
we can apply $R_{yz}$ after changing the color of $u$ to color~1.
We consider the case (2).
If we can apply a $(1,4)$-change concerning $v$ preserving colors of vertices in $S$,
then we are done by a $(2,4)$-change concerning $u$ and $R_{yz}$,
and hence, $vx \in E(G)$.
This implies $vy \notin E(G)$ since $G$ has no odd wheel.
Thus, 
after changing the color of $w$ to color~1 by Lemma~\ref{lem:no_edge}(ii),
we can exchange colors of $u$ and $v$ by a $(2,4)$-change and apply $R_{yz}$.

Thus, we consider the case (3).
By similar observations as above, we have two sub-cases (see Figure~\ref{fig:3-ab}):
\begin{itemize}
\item[(3-a)] $vx \notin E(G)$, $ww' \in E_S$, $h(w)=1$, $h(w')=3$ and $uw,uw' \in E(G)$.
\item[(3-b)] $vx \in E(G)$.
\end{itemize}

\begin{figure}[htb]
\centering
\input{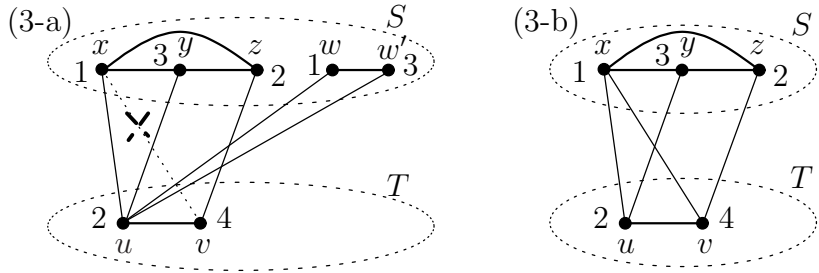}
\caption{Cases (3-a) and (3-b)}
\label{fig:3-ab}
\end{figure}

\smallskip
\noindent
\underline{Case (3-a).}
If we can change the color of $v$ to~1 without changing colors of vertices in $S$,
then this case can be deduced to the case (2).
So, we try to make a coloring such that we can apply the above K-change.
If $w' \notin \{x,y,z\}$, then $vw' \in E(G)$;
otherwise, we obtain a desired coloring 
by exchanging the colors of $w$ and $w'$ by a K-change.
Since $G$ has no odd wheel, $uw' \notin E(G)$.
This means that we obtain a desired coloring 
by applying $(1,3)$- and $(1,2)$-changes to $w$ and $w'$, respectively.
So $w' = y$ and $vy \notin E(G)$ since $G$ has no odd wheel.
Therefore, we have a standard 4-coloring of $G$ by applying K-changes in the following order:
\begin{itemize}
\item $(1,3)$-change concerning only $w,x,y$.
\item $(1,4)$-change concerning only $v$.
\item $(1,2)$-change concerning only $u,v,y,z$.
\item $(1,4)$-change concerning only $u$.
\item $(1,3)$-change concerning only $x,z$.
\end{itemize}

\smallskip
\noindent
\underline{Case (3-b).}
Now $vy \notin E(G)$ since $G$ has no odd wheel.
If every vertex in $S \setminus \{x,y,z\}$ with color~3 has no neighbor with color~1 in $G$,
then we apply the following series of K-changes, denoted by $Q_{1,3}$:
\begin{itemize}
\item $(1,3)$-changes concerning the colors of vertices with color~3 in $S \setminus \{x,y,z\}$.
\item $(3,4)$-change concerning only $v$.
\item $(2,3)$-change concerning only $u,v,y,z$.
\item $(3,4)$-changes concerning $u$
and $(1,3)$-changes concerning isolated vertices with color~1 in $G[S]$.
\end{itemize}

If $Q_{1,3}$ cannot be directly applied to $G$,
then we have one of the following (see Figure~\ref{fig:type2pcd}):
\begin{itemize}
\item[(c)] $ww'\in E_S$, $h(w)=3, h(w')=1$ and $vw \in E(G)$;
the final condition holds to avoid the second step of $Q_{1,3}$.
\item[(d)] $ww'\in E_T$, $h(w)=1$, $h(w')=4$ and $wz,w'z \in E(G)$;
for example, if $wz \notin E(G)$,
then after changing the color of $w$ to~2 by Lemma~\ref{lem:no_edge}(ii), 
we can apply $Q_{1,3}$.
\end{itemize}

\begin{figure}[htb]
\centering
\input{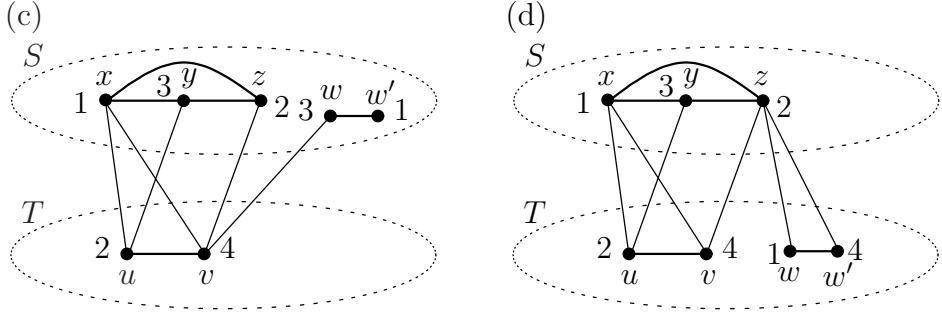}
\caption{Two sub-cases (c) and (d) of the case (3-b)}
\label{fig:type2pcd}
\end{figure}

In what follows, our aim is to make that
every vertex in $S \setminus \{x,y,z\}$ with color~3 has no neighbor with color~1 in $G$.
We first consider the case (c).
Suppose $w,w' \notin \{x,y,z\}$.
Note that both $w$ and $w'$ are adjacent to $u$;
for example, if $uw \notin E(G)$,
then the colors of $w$ and $w'$ can be changed to $2$ and $3$ 
by $(1,3)$- and $(1,2)$-changes concerning $w,w'$ and only $w$, respectively.
This implies $vw' \notin E(G)$ since $G$ has no odd wheel,
but in this case,
we can directly execute $Q_{1,3}$ 
since no vertex in $T$ has color~3 after the first operation in $Q_{1,3}$.
Thus, $w' = x$ since $vy \notin E(G)$, and moreover, $uw \notin E(G)$ since $G$ has no odd wheel.
Therefore,
we can change the color of $w$ to color~2 by a $(2,3)$-change by Lemma~\ref{lem:no_edge}(ii), 
and hence, we can execute $Q_{1,3}$.

Next we consider the case (d).
Similarly to the previous case,
we have a standard 4-coloring of $G$ by applying K-changes in the following order:
\begin{itemize}
\item We temporarily change colors of isolated vertices in $G[T]$ adjacent to $y$ to 1 or 2;
this is applicable since at least one of $x$ and $z$ is not adjacent to such a vertex.
\item $(1,4)$-change concerning $w,w'$ if $yw' \in E(G)$;
otherwise, $(3,4)$-change concerning only $y$.
(Note that if $yw' \in E(G)$ (i.e., $v \neq w'$), then $yw,xw' \notin E(G)$.)
\item $(2,4)$-change concerning $u,v,y,z,w'$ and several isolated vertices in $G[T]$.
\item $(3,4)$-change concerning only $z$.
\item $(1,4)$- or $(2,4)$-changes concerning isolated vertices in $G[T]$ with color~1 or~2
to change colors of all isolated vertices in $G[T]$ to color~4.
\end{itemize}
This completes the proof of the claim.
\end{proof}

We finally show the following.

\begin{cl}\label{cl:thm_3}
Every two standard 4-colorings of type $(2)$ are K-equivalent.
\end{cl}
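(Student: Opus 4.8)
The plan is to cut down to the tightest configuration and then normalise the handful of vertices whose colour is not already forced. As in the proof of Claim~\ref{cl:thm_2}, it suffices to treat $|E|=5$, since a $B+E_\ell$ graph with $\ell<5$ is obtained from one with $\ell=5$ by deleting edges and the conclusion is inherited. So we may write $E=\{xy,yz,zx\}\cup\{uv\}\cup\{ww'\}$ with $uv\in E_T$, and two cases occur: (I)~$ww'\in E_S$, so $w,w'\in S$; (II)~$ww'\in E_T$, so $w,w'\in T$. In a standard $4$-coloring of type~(2) one has $(f(x),f(y),f(z))=(1,2,3)$, every vertex isolated in $G[S]$ is coloured $3$, and every vertex isolated in $G[T]$ is coloured $4$; hence the only vertices whose colour is not determined are $u,v$ together with $w,w'$ (lying in $S$ in case~(I) and in $T$ in case~(II)). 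Since $G$ is $3$-colorable it contains no odd wheel, and in particular no $K_4$, so no vertex outside $\{x,y,z\}$ is adjacent to all of $x,y,z$; this bounds the adjacency patterns that $u,v,w,w'$ can have with the triangle, and analogously constrains their mutual adjacencies.

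I would then fix one canonical standard coloring $f^{\ast}$ — say $(f^{\ast}(u),f^{\ast}(v))=(1,4)$, colour~$4$ on every other vertex of $T$, and a fixed admissible ordered pair of distinct colours from $\{1,2,3\}$ on $(w,w')$ in case~(I) — and transform an arbitrary standard coloring $f$ into $f^{\ast}$ in two phases. In phase~1 we move the free vertices lying in $T$ to their canonical colours: since $C(S)=\{1,2,3\}$ no vertex of $S$ ever carries colour~$4$, so a vertex $t$ of $T$ can be recoloured to $4$ by a single Kempe change concerning only $t$ as soon as its neighbours along the added edges incident to it are not coloured~$4$; consequently $\{f(u),f(v)\}$ (and, in case~(II), the colours on $\{w,w'\}$) can be driven to their canonical values by a short sequence of $(1,4)$-, $(2,4)$- and $(1,2)$-changes. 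The subtlety is that $u$ or $v$ may be adjacent to two differently-coloured vertices of the triangle at once; then, exactly as with the moves $R_{yz}$ and $Q_{1,3}$ in the proof of Claim~\ref{cl:thm_2}, one first flips a two-coloured component containing one or two of $x,y,z$, recolours the target vertex, and restores that component. In phase~2, needed only in case~(I), $T$ is already canonical and it remains to move the colours on $\{w,w'\}$ to canonical ones: the edge $ww'$ either adds a pendant vertex to the triangle (if one of $w,w'$ is in $\{x,y,z\}$) or an isolated edge to $G[S]$, so in both situations the colour of $\{1,2,3\}$ unused on $\{w,w'\}$, together with colour~$4$ used as a scratch colour for vertices of $S$ (legitimate since no vertex of $S$ is ever coloured $4$) and after temporarily recolouring the finitely many $T$-neighbours of $w,w'$ that carry the obstructing colours, provides enough room to reach $f^{\ast}$.

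The main obstacle is phase~1 in the configurations where $u,v$ (or $w,w'$ in case~(II)) attach to several vertices of the triangle: there no single Kempe change recolours the target without disturbing the pinned triangle, and one must produce, in each such configuration, a two-coloured component that can be flipped and then restored, much as in Claim~\ref{cl:thm_2}. Deciding which configurations occur relies repeatedly on the absence of odd wheels and of $K_4$ in $G$ (for instance $u$ adjacent to both $x$ and $y$ forces $u$ non-adjacent to $z$), and comes down to a finite but genuinely case-heavy verification, no harder than the analysis already completed for Claim~\ref{cl:thm_2}. Once $f$, and symmetrically every standard coloring, is linked to the single coloring $f^{\ast}$, K-equivalence of any two standard $4$-colorings of type~(2) follows.
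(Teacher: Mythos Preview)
Your overall strategy---reduce to a single canonical standard coloring and show every standard coloring reaches it---is equivalent to what the paper does (the paper transforms an arbitrary standard $f$ directly into an arbitrary standard $g$, organised by whether $f$ and $g$ agree on $u$, then on $v$, then on $w,w'$). So the plan is sound, and you have correctly identified that $u,v,w,w'$ are the only vertices whose colours are not pinned.

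However, what you have written is an outline, not a proof, and two of the concrete tools you propose do not work as stated. First, the particular canonical target $(f^{\ast}(u),f^{\ast}(v))=(1,4)$ need not be a proper coloring at all: if $ux\in E(G)$ then $u$ can never carry colour~$1$ in a standard coloring, so the ``say'' here hides a genuine choice that must depend on the adjacencies of $u,v$ to the triangle. Second, and more seriously, your phase-2 device of using colour~$4$ as a scratch colour on $w\in S$ is not innocuous: in a standard coloring \emph{every} isolated vertex of $G[T]$ carries colour~$4$, so the $(j,4)$-Kempe component of $w$ can be large, and ``temporarily recolouring the finitely many $T$-neighbours of $w,w'$'' may be blocked because those isolated $T$-vertices can be adjacent to both $x$ and $y$ and hence have no colour in $\{1,2\}$ available. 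The paper never uses colour~$4$ on $S$ in this step; instead it analyses the $(f(w),g(w))$-component of $w$, shows that if it reaches $T$ then necessarily $f(w')=g(w)$, $f(u)=f(w)$ and $uw'\in E(G)$, and then recolours $w'$ before $w$ via Lemma~\ref{lem:no_edge}(ii).

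The cases you defer as ``no harder than Claim~\ref{cl:thm_2}'' are in fact where all the content lies. In particular, when $f(u)\neq g(u)$ and $f(v)\neq g(v)$ (the paper's Case~3), several sub-configurations require specific sequences of four or five Kempe changes in a prescribed order, with intermediate recolourings of isolated $G[S]$- or $G[T]$-vertices, and the argument repeatedly uses the absence of odd wheels to rule out adjacencies (e.g.\ $uz\notin E(G)$ or $vz\notin E(G)$ in sub-case (a)-(ii)). Your two-phase plan would have to reproduce this analysis; simply asserting that ``a short sequence of $(1,4)$-, $(2,4)$- and $(1,2)$-changes'' suffices is not enough, because moving $u$ or $v$ can drag two vertices of the triangle into the Kempe component and the restoration step is not automatic.
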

\begin{proof}[Proof of Claim~$\ref{cl:thm_3}$]
Let $u,v,w,w',x,y,z$ be the same as in Claim~\ref{cl:thm_2}.
Let $f$ be a standard 4-coloring of $G$,
that is, $C(S)=\{1,2,3\}$, $C(T) \subseteq \{1,2,4\}$, 
$f(x)=1$, $f(y)=2$, $f(z)=3$
and $f(p)=3$ (resp., $f(p)=4$) for each isolated vertex $p$ in $G[S]$ (resp.,~$G[T]$).
We may suppose that there is a distinct standard 4-coloring $g$ other than $f$,
and we suffice to prove that $f$ can be changed to $g$ by K-changes.

\medskip
\noindent
{\bf Case 1.} $f(u)=g(u) = c_u$, $f(v)=g(v) = c_v$ but $(f(w),f(w')) \neq (g(w),g(w'))$.

Suppose $ww' \in E_S$ and $f(w) \neq g(w)$.
Note that $w$ is not adjacent to any vertex in $T$ with color $g(w)$ 
since $f(u)=g(u)$ and $f(v)=g(v)$.
So, if a $(f(w),g(w))$-component $D$ with $w$ contains a vertex in $T$,
then we have $f(w') = g(w) = c^g_{w}$, $f(u)=g(u)=f(w) = c_u$ and $uw' \in E(G)$; 
see Figure~\ref{fig:case1_fgs}.
(Note that if $w' \in \{x,y,z\}$, then $D$ consists only of $w$.)

\begin{figure}[htb]
\centering
\input{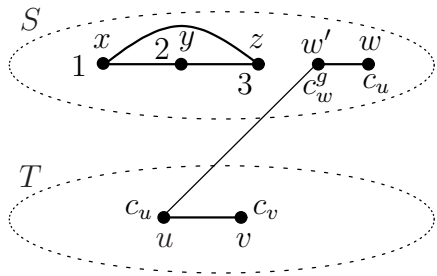}
\caption{The coloring $f$ in Case 1}
\label{fig:case1_fgs}
\end{figure}

In this case, $c^g_{w'} = g(w') \neq g(u) = f(w) = c_u$, and hence,
we can change the color of $w'$ to $c^g_{w'}$ 
by a $(c^g_{w},c^g_{w'})$-change by Lemma~\ref{lem:no_edge}(ii).
After that, we can change the color of $w$ by Lemma~\ref{lem:no_edge}(ii) again.
Since the similar argument as above follows even if $ww' \in E_T$,
we can obtain the coloring $g$.

\medskip
\noindent
{\bf Case 2.} $f(u)=g(u)$ but $f(v)\neq g(v)$.

Let $R_v$ be a K-change on a $(f(v),g(v))$-component $H$ with $v$,
and we show that $R_v$ can be applied preserving the colors of $x,y,z$ and $u$ after several K-changes.
If $H$ consists of only $v$, then we are done by Lemma~\ref{lem:no_edge}(ii).
Otherwise, $H$ contains $w$ (i.e., $f(w) = g(v)$) and $w \notin \{x,y,z\}$;
note that $g(v) \neq g(u) = f(u)$.
If $ww' \in E_T$, i.e., $w'=v$, regardless of colors of $g(w)$,
we can apply a $(f(w),g(w))$-change concerning $w$ by Lemma~\ref{lem:no_edge}(ii).
Hence, after that, we can apply $R_v$.
So suppose $ww' \in E_S$, and we have the following two sub-cases by symmetry 
(see Figure~\ref{fig:case2_subcases}):
\begin{itemize}
\item[1.] $f(v) = 4$, $g(v)=f(w)= 1$ and $w \neq x$.
\item[2.] $f(v) = 1$, $g(v) = f(w)=2$ and $w'=x$.
\end{itemize}

\begin{figure}[htb]
\centering
\input{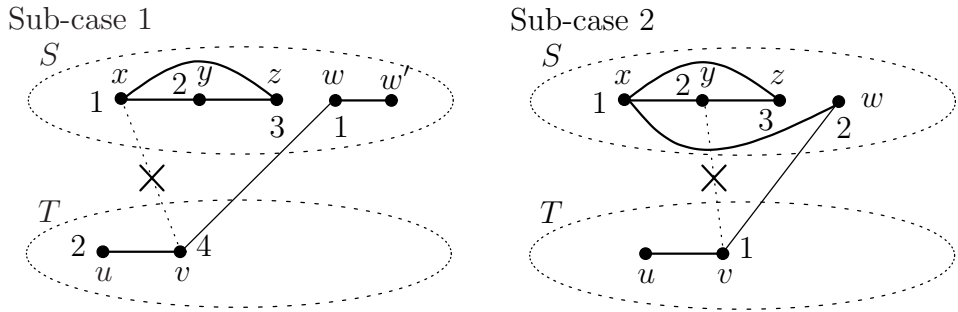}
\caption{Two sub-cases 1 and 2 of Case 2 where each of colorings denotes the 4-coloring $f$}
\label{fig:case2_subcases}
\end{figure}

\smallskip
\noindent
\underline{Sub-case 1.} 
We have $f(u) = g(u) = 2$ by the assumption.
If $w$ is not adjacent to any vertex of color~3,
then we can apply $R_v$ after a $(1,3)$-change concerning $w$, since $3 \notin C(T)$.
For the case when $w$ is not adjacent to any vertex of color~2, we are done similarly to the above.
Thus, $f(w') = 3$ and $uw\in E(G)$.
Now, if $w' = z$, 
then 
$g(w) \notin \{1,2,3\}$ since $g(u)=2$ and $g(v)=1$,
which contradicts the 3-colorability of $G$.
So we have $w' \neq z$, $vw' \in E(G)$ 
(otherwise, we can apply $R_v$ after a $(1,3)$-change concerning $w$ and $w'$),
and $uw' \notin E(G)$ since $G$ has no odd wheel.
In this case, 
we can change the color of $w'$ to color~2 by a $(2,3)$-change by Lemma~\ref{lem:no_edge}(ii),
and hence we can apply $R_v$ after a $(1,3)$-change concerning $w$.

\smallskip
\noindent
\underline{Sub-case 2.} 
This sub-case can be proved similarly to Sub-case~1, that is,
we can apply $R_v$ after a $(2,3)$-change concerning $w$ by Lemma~\ref{lem:no_edge}(ii), 
since $f(u) = g(u) \neq g(v)$ and $3 \notin C(T)$.

\medskip
\noindent
{\bf Case 3.} $f(u) \neq g(u)$.

We may assume that $f(v)\neq g(v)$ by Case~2.
We consider the following sub-cases:
\begin{itemize}
\item[(a)] $g(u)=4$ (the case when $g(v)=4$ is similar).
\item[(b)] $g(u)=2$, $g(v)=1$ and $f(u) \in \{1,4\}$.
\end{itemize}

\smallskip
\noindent
\underline{Sub-case (a).}  
We may suppose that $f(u)=1$, i.e., $ux \notin E(G)$. 
If we can apply a $(1,4)$-change concerning $u$ preserving colors of vertices $x,y$ and $z$, 
then we are done.
Otherwise, as shown in Figure~\ref{fig:case3-i-ii}, we have either 
\begin{itemize}
\item[(a)-(i)] $f(v)=2$, $f(w)=4$, $w' = u$ (i.e., $ww' \in E_T$) and $wx\in E(G)$, or
\item[(a)-(ii)] $f(v) = 4$. 
\end{itemize}

\begin{figure}[htb]
\centering
\input{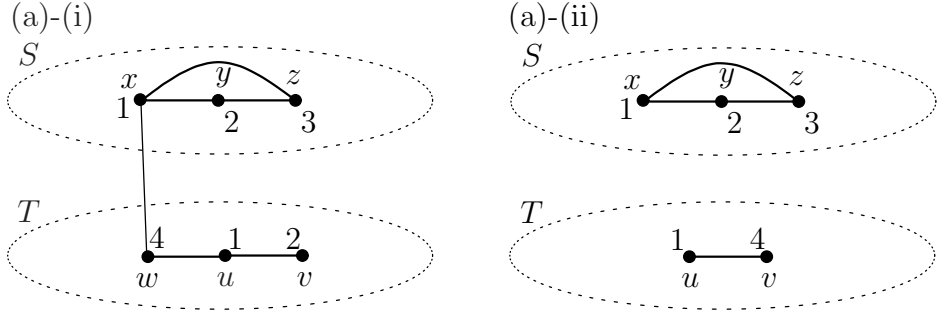}
\caption{Sub-cases (a)-(i) and (a)-(ii)}
\label{fig:case3-i-ii}
\end{figure}

In (a)-(i), note that $g(w)=2$ and $g(v)=1$ (otherwise, the case can be deduced to Case~2).
Thus, we change the color of $w$ to~2 by a $(2,4)$-change by Lemma~\ref{lem:no_edge}(ii),
and then the case is deduced to Case~2 by labeling $w,u,v$ as $u,v,w$, respectively.

So we consider (a)-(ii).
Note that $g(v)=2$, i.e., $vy \notin E(G)$.
So, similarly to the proof of Case~2,
we may suppose that $ww' \in E_S$ and 
that $f(w)=2$, $vw \in E(G)$ (i.e., $w \neq y$) and $f(w')=3$.
If $w' \neq z$,
then we also have $vw'\in E(G)$ but at least one of $uw$ and $uw'$ does not exist 
since $G$ has no odd wheel.
This implies that we can change the color of $v$ to $2$ by applying K-changes in the following order;
\begin{itemize}
\item $(2,3)$-change concerning $w,w'$ (if $uw' \notin E(G)$).
\item $(1,2)$-change concerning only $w$ or $w'$. 
\item $(2,4)$-change concerning only $v$. 
\end{itemize}
Thus, we have $w' = z$.
By the assumptions, we have $uw,uy,vx \in E(G)$ by the above arguments 
(see Figure~\ref{fig:case_a_ii});
for example, if $uy \notin E(G)$, 
then we are done by applying $(1,2)$- and $(2,4)$-changes concerning $u,w$ and $u,v$, respectively.

\begin{figure}[htb]
\centering
\input{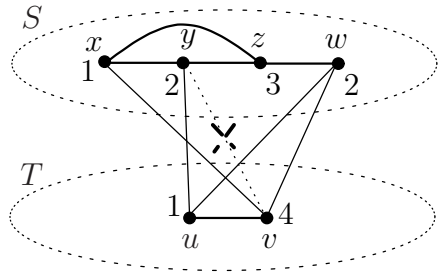}
\caption{The configuration of the case (a)-(ii)}
\label{fig:case_a_ii}
\end{figure}

In this case, $uz \notin E(G)$ or $vz \notin E(G)$ since $G$ has no odd wheel.
Therefore, we can change the color of $u$ to $4$ by applying K-changes as follows:
If $uz \notin E(G)$, then we apply K-changes in the following order.
\begin{itemize}
\item $(1,3)$-change concerning $u$ (and several isolated vertices in $G[S]$ adjacent to $u$).
\item $(1,2)$-change concerning only $w$. 
\item $(2,4)$-change concerning only $v$. 
\item $(3,4)$-change concerning only $u$. 
\item $(1,3)$-changes concerning isolated vertices in $G[S]$ with color~1.
\end{itemize}
If $vz \notin E(G)$, then we apply K-changes in the following order.
\begin{itemize}
\item $(2,3)$-changes concerning isolated vertices in $G[S]$ adjacent to $v$ to change the color of such a vertex to~2.
\item $(3,4)$-change concerning only $v$. 
\item $(1,4)$-change concerning only $u$. 
\item $(1,2)$-change concerning only $w$. 
\item $(2,3)$-change containing $v$ and isolated vertices in $G[S]$ adjacent to $v$.
\end{itemize}

\smallskip
\noindent
\underline{Sub-case (b).} 
If $f(u)=4$, then $f(w)=2$, $uw \in E(G)$ (i.e., $ww' \in E_S$) and $w \neq y$;
note that if $ww' \in E_T$, then the case can be immediately deduced to Case~2 as in (a)-(i).
In this case, $f(v) = 2$ and $v$ is adjacent to neither $x$ nor $y$ in $G$,
and hence, we can change the color of $v$ to $1=g(v)$ by a $(1,2)$-change preserving colors of $x,y$ and $z$.
Hence, we suppose $f(u)=1$.
If $f(v)=4$,
then by the similar argument as (a)-(ii),
we have $ww' \in E_S, f(w)=2, f(w')=1$ and $uw, vw' \in E(G)$.
In this case, we see $w,w' \notin \{x,y,z\}$, and hence, 
we can change the color of $u$ to $2=g(u)$ by a $(1,2)$-change preserving colors of $x,y$ and $z$.
So suppose $f(v)=2$.
If a $(1,2)$-change concerning $u$ changes the colors of $x,y$, 
then we have either
$ww' \in E_T$, $f(w)=1$, $wy \in E(G)$ and $w'=v$, or 
$ww' \in E_S$, $f(w)=1$, $vw \in E(G)$ and $w'=y$, 
by symmetry.
However, we can change the color of $v$ to color~4 by a $(2,4)$-change in either case,
and hence, we are done similarly to the previous argument for $f(v)=4$.
\end{proof}

Therefore, by Claims~\ref{cl:thm_1},~\ref{cl:thm_2} and~\ref{cl:thm_3},
every two 4-colorings are K-equivalent in each of two types (1) and (2), 
which completes the proof of the theorem.
\qed

\section{Proof of Theorem~\ref{thm:main}}\label{sec:5}

Before we start proving Theorem~$\ref{thm:main}$,
we show the following proposition.

\begin{prop}\label{prop:5colors}
Let $G$ be a $4$-colorable $B+E_{\ell}$ graph with $\ell < \binom{5}{2}=10$.
If every component $H$ induced by $\ell$ edges added is a path, 
a cycle of length at least~$4$
or a complete bipartite graph,
then $\Kc(G,5)=1$.
\end{prop}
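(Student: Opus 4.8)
The plan is to follow the two‑phase template used for the $k=4$ results (Theorems~\ref{thm:c3e5} and~\ref{thm:bm5}): first show that every $5$‑coloring of $G$ can be transformed by Kempe changes into a \emph{standard} $5$‑coloring, and then show that all standard $5$‑colorings are K‑equivalent to one another. By a standard $5$‑coloring I mean one with $C(S)\subseteq\{1,2,3\}$ and $C(T)\subseteq\{4,5\}$ (so $C(S)\cap C(T)=\emptyset$), with each isolated vertex of $G[S]$ (resp.\ $G[T]$) colored~$3$ (resp.\ $4$) as a convenient normalization. The second phase is immediate: since every component of the subgraph formed by the added edges is a path, a cycle of length at least~$4$, or a complete bipartite graph, it is bipartite or an odd cycle, so Lemma~\ref{lem:no_intersect} applies to a standard coloring; given two standard colorings one first recolors $S$ (keeping $T$ fixed) and then recolors $T$ (keeping $S$ fixed) to pass from one to the other. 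Hence everything reduces to the first phase.

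Two counting facts set the stage. Since $\ell<\binom{5}{2}=10$, the $10$ color pairs cannot all be realized by edges of $E$, so there is a pair $\{a,b\}$ with no $(a,b)$‑edge in $E$; after relabeling we may take it to be $\{4,5\}$, and then Observation~\ref{obs:1stset} lets us start from $5\notin C(S)$ and $4\notin C(T)$. Since $\ell\le 9$ and an odd cycle of length at least~$5$ uses at least five edges, at most one of $G[E_S]$ and $G[E_T]$ can contain such a cycle; after possibly swapping $S$ and $T$ we may assume $G[E_T]$ is bipartite, so $\chi(G[E_T])\le 2$ and $\chi(G[E_S])\le 3$, and therefore a standard coloring does exist (color $G[E_S]$ with $\{1,2,3\}$, $G[E_T]$ with $\{4,5\}$; the disjoint palettes make it automatically proper on the edges of $B$). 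Note also that none of the allowed component types contains a triangle, so in particular the triangle‑in‑$E_S$ configuration that forces the long case analysis in the proof of Theorem~\ref{thm:c3e5} does not occur here, as anticipated in the remark following Theorem~\ref{thm:main}.

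The heart of the matter is the first phase: transforming an arbitrary $5$‑coloring into a standard one. Starting from $5\notin C(S)$, $4\notin C(T)$, I would apply Observation~\ref{obs:1stset} again to clear one more color, reaching (up to symmetry) $C(S)\subseteq\{1,2,3,4\}$, $C(T)\subseteq\{4,5\}$, and then drive the remaining color~$4$ out of $C(S)$. The basic move for each individual step is Lemma~\ref{lem:no_edge}(ii): a vertex of $T$ colored in $\{1,2,3\}$ that fails to be adjacent to some color of $\{4,5\}$ can be recolored into $\{4,5\}$, and symmetrically for a vertex of $S$ colored~$4$. After normalizing isolated vertices via Observation~\ref{obs:2ndset}, the only remaining obstructions are vertices adjacent, through $B$, to vertices of \emph{both} colors $4$ and $5$ (resp.\ of two colors of $\{1,2,3\}$); to clear such an obstruction one must first shuffle the coloring inside the relevant component of $G[E_S]$ (resp.\ $G[E_T]$). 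Because each such component is a path, a cycle of length at least~$4$, or a complete bipartite graph, its proper colorings are completely understood (Propositions~\ref{prop:bipar} and~\ref{prop:dege}), so one can always find a Kempe change inside the component (together with $(i,j)$‑changes on isolated vertices) that releases the needed color without producing a new obstruction. I expect the proof to run by induction on the number of ``bad'' vertices — vertices of $S$ colored in $\{4,5\}$ plus vertices of $T$ colored in $\{1,2,3\}$ — with a short case split according to whether $G[E_S]$ is bipartite or contains an odd cycle and according to the local structure of $B$ around a bad vertex, along the lines of but much shorter than Claims~\ref{cl:thm_1}–\ref{cl:thm_3}.

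The main obstacle is exactly this reduction step in the tight case where $G[E_S]$ contains an odd cycle: then $|E_S|\ge 5$, hence $\chi(G[E_S])=3$ and $\chi(G[E_T])=2$, and the palettes $\{1,2,3\}$ and $\{4,5\}$ partition the five colors with no slack, so every recoloring must be performed exactly on those colors. Handling this requires tracking, for each component of $G[E_S]$ and each bad vertex of $T$ incident to it through $B$, precisely which of the three colors of $S$ are blocked, and checking that the desired sequence of Kempe changes inside the component always exists; this is where the hypothesis on the component types is used most heavily. Once the first phase is established, the second phase gives $\Kc(G,5)=1$, and the proposition is in place to serve as the base case $k=5$ of the inductive proof of Theorem~\ref{thm:main}.
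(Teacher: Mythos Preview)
Your proposal is a plan rather than a proof: you correctly identify the hard step (driving the last ``wrong'' color out of $S$ when $G[E_S]$ contains an odd cycle and the palettes are tight) but do not carry it out, and the vague inductive scheme on the number of ``bad'' vertices is not obviously well-founded, since a Kempe change that fixes one bad vertex can create others. The paper does not attempt a direct reduction to $C(S)\cap C(T)=\emptyset$ along these lines.

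The paper's route is organized around a different and more concrete idea. After arranging $5\notin C(S)$, $1\notin C(T)$ and then \emph{maximizing} the number of color-$1$ vertices in $S$ and color-$5$ vertices in $T$, one peels these off: set $G''=G-V_1-V_5$. The hypothesis on component types now pays off as a counting statement: in any path or cycle component at least a third of the vertices received color~$1$ (resp.~$5$), and in a complete bipartite component an entire side did, so each component loses at least two thirds of its edges; consequently $|E(G''[S])|+|E(G''[T])|\le 3$, and the surviving edges form a matching. The proof then splits very cleanly. If at most two such edges remain (or one side is empty), an easy argument in the style of Claim~\ref{cl:thm_1} gives $C(S)\cap C(T)=\emptyset$ and Lemma~\ref{lem:no_intersect} finishes (this is the paper's Claim~5.2). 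If exactly three edges remain, say $ab\in G''[S]$ and $uv,xy\in G''[T]$, one first shows directly that all $3$-colorings of $G''$ with colors $\{2,3,4\}$ are K-equivalent (a short two-case check, Claim~5.3), and then exhibits, using the single neighbor of $a$ (resp.~$b$) in $V_1$, a sequence of K-changes that pushes one more vertex into $V_1$, reducing to the two-edge case. The ``peel off $V_1,V_5$ and count what is left'' trick is exactly the missing idea that replaces your open-ended induction by a finite case analysis on at most three matching edges.
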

\begin{proof}
Let $G$ be a $4$-colorable $B+E_{\ell}$ graph with $\ell = |E| < \binom{5}{2}=10$.
Suppose that $G$ is colored by five colors $1,2,\dots,5$, and let $f$ be a $5$-coloring of $G$.

Since $|E| < \binom{5}{2}=10$,
there are two colors, say 1 and 5, such that $G(1,5)$ contains no edge in $E$.
By Observation~\ref{obs:1stset}, we have $5 \notin C(S)$ and $1 \notin C(T)$.
By Observation~\ref{obs:2ndset},
every component in $G[S]$ (resp., $G[T]$) has color~1 (resp.,~5)
and moreover, 
we further make the number of vertices with color~1 (resp., color~5) in $S$ (resp., $T$) be as large as possible.
Hence, for every component $H$ in $G[S]$ or $G[T]$, one of the following holds:
\begin{itemize}
\item[(a)] If $H$ is a complete bipartite graph, then one of partite sets of $H$ has only color~1 or 5
(since it is an independent set).
\item[(b)] The number of vertices with color~1 or 5 in $H$ is at least $\frac{1}{3}|V(H)|$
(since a maximal independent set of every component has size at least $\frac{1}{3}|V(H)|$).
\item[(c)] Every vertex of $H$ is adjacent to a vertex of $H$ with color~1 (resp., color~5)
if $H \subseteq G[S]$ (resp., $H \subseteq G[T]$)
(since otherwise we can color a vertex not satisfying this condition by color~1 or~5,
which contradicts the maximality of the number of vertices with color~1 and~5).
\end{itemize}

Let $V_1$ and $V_5$ be the set of vertices with colors~1 and~5, respectively,
and let $G' = G - V_5$ and $G'' = G' - V_1$. 
Note that $G''$ is colored by three colors 2, 3 and 4.
Moreover,
$G''$ has at most three independent edges belonging to $G''[S]$ or $G''[T]$ by conditions (a), (b) and (c);
if a component $D$ of $G[S]$ or $G[T]$ containing an edge in $G''[S]$ or $G''[T]$,
then $D$ is a path or a cycle with at least $3q$ edges,
where $q = |E(D\cap G''[S])| + |E(D\cap G''[T])|$,
and hence, we have $|E(G''[S])| + |E(G''[T])| \leq 3$ by $|E| < 10$.

We first show the following claim.

\begin{cl}\label{cl:5c_cl1}
If $|E(G''[S])| + |E(G''[T])| \leq 2$,
or either $G''[S]$ or $G''[T]$ has no edge,
then $Kc(G,5)=1$.
\end{cl}
\begin{proof}[Proof of Claim~$\ref{cl:5c_cl1}$]
Suppose that $E(G''[S]) \neq \emptyset$.
Note that one of $(2,3)$-, $(2,4)$- and $(3,4)$-edges does not exist in $E(G''[S]) \cup E(G''[T])$
by $|E(G''[S])| + |E(G''[T])| \leq 2$.
If $E(G''[T]) = \emptyset$,
then we can easily have $C(S)=\{1,2,3\}$ and $C(T) = \{4,5\}$ by Lemma~\ref{lem:no_edge}
and hence $Kc(G,5)=1$ by Lemma~\ref{lem:no_intersect}.
If $|E(G''[S])|=|E(G''[T])| =1$,
then at least one of $G[S]$ or $G[T]$, say $G[S]$, contains only bipartite components
since every component is not a triangle.
Thus, as in the proof of Case~1 in Claim~\ref{cl:thm_1},
we can obtain $|C(S)|=2$ by K-changes,
and hence, $Kc(G,5)=1$ by Lemma~\ref{lem:no_intersect} since $|C(T)|=3$ by $|E(G''[T])| =1$.

So we may suppose that $G''[T]$ has exactly three independent edges, that is, $E_S = \emptyset$.
In this case,
since all vertices in $S$ are colored by~1 in $G$, 
every 5-colorings of $G$ are K-equivalent by Lemma~\ref{lem:no_intersect}.
\end{proof}

\begin{rem}\label{rem:5-3}
The proof of Claim~\ref{cl:5c_cl1} is shorten by avoiding a traingle.
If we allow a triangle,
then the proof of the case when $|E(G''[S])| = |E(G''[T])| = 1$ 
is as long as the proof of the later of Theorem~\ref{thm:c3e5} 
(i.e., Claims~\ref{cl:thm_2} and~\ref{cl:thm_3}).
\end{rem}

By Claim~\ref{cl:5c_cl1},
we may suppose that there are three edges $ab,uv,xy$ such that $ab \in E(G''[S])$ and $uv,xy \in E(G''[T])$,
and hence, by conditions (a), (b) and (c) and the assumption of the proposition,
$G''[S]$ and $G''[T]$ has no isolated vertex; otherwise, we have a contradiction to $|E| < 10$.
Next we show the Kempe equivalence of 3-colorings of $G''$.

\begin{cl}\label{cl:5c_cl2}
Every two 3-colorings of $G''$ with using colors $2,3,4$ are K-equivalent.
\end{cl}
\begin{proof}[Proof of Claim~$\ref{cl:5c_cl2}$]
We may suppose that there are two distinct 3-colorings $g_1$ and $g_2$ 
and that $g_i(a)=2$ and $g_i(b)=3$ for $i=1,2$
since we can make $C(\{a,b\}) = \{2,3\}$ by a $(2,4)$- or $(3,4)$-change in $G''$ by $|E(G''[S])|=1$.
Suppose $G$ is colored by the coloring $g_1$,
and then we show that $g_1$ can be transformed into $g_2$ by K-changes.
Note that it suffices to consider only the colors of $u$ and $v$ by the symmetry of $uv$ and $xy$.

\smallskip
\noindent
Case 1. $g_1(u)\neq g_2(u)$ and $g_1(v) = g_2(v)$.

In this case,
we are immediately done by applying a $(g_1(u),g_2(u))$-change 
to a $(g_1(u),g_2(u))$-component $H$ in $G''$ containing $u$,
since $V(H) = \{u\}$ by that for any $p \in \{a,b,v\}$, 
$g_1(p) = g_2(p)$ and $up \notin E(G'')$ if $g_2(u) = g_2(p)$.
(Recall that $uv$ and $xy$ are independent.)

\smallskip
\noindent
Case 2. $g_1(u)\neq g_2(u)$ and $g_1(v) \neq g_2(v)$.

Let $H$ be a $(g_1(u),g_2(u))$-component in $G''$ with $u$.
If $H$ contains both $a$ and $b$, that is, $g_1(u)=g_1(a)=2$ and $g_2(u)=g_1(b)=3$ by symmetry,
then $u$ is adjacent to neither $a$ nor $b$, 
that is, $g_1(v)=g_2(u)=3$ and $va \in E(G'')$.
In this case, since $g_2(v)=4$ ($g_2(v) \neq 2$ by $va \in E(G'')$ and $g_2(a)=2$), 
we can change the color of $v$ to color~4 by Lemma~\ref{lem:no_edge}(i).
Therefore, we can reduce the case to Case~1, which completes the proof of the claim.
\end{proof}

By Claim~\ref{cl:5c_cl2},
every 5-coloring of $G$ can be transformed into a 5-coloring $f$ such that
$f(a)=2,f(b)=3$,
$C(S)=\{1,2,3\}$, 
$C(T) \subseteq \{2,3,4,5\}$,
and all isolated vertices in $G[S]$ (resp., $G[T]$) have color~1 (resp.,~5).

Now we prove that $\Kc(G,5)=1$.
By the assumption,
each of $a,b$ (resp., each of $u,v,x,y$) is adjacent to exactly one vertex with color~1 (resp.,~5)
for example, if $a$ is adjacent to at least two vertices with color~1,
then the component with $a$ must be a complete bipartite graph since the degree of $a$ is at least~3,
that is, no edge of the component remains in $G''$ by (a), a contradiction.
Thus, let $c,c',w,w',z,z'$ be such vertices as shown in Figure~\ref{fig:g5_s},
where $f(c)=f(c')=1$ and $f(w)=f(w')=f(z)=f(z')=5$.
Note that $w,w',z,z'$ possibly coincide, e.g., $w = z', w' = z$ (forming a 6-cycle $uvzxyw$),
but the remaining proof will work for every variation of identification of those vertices.
(Observe that $c \neq c'$ since every component is not a triangle.)

\begin{figure}[htb]
\centering
\input{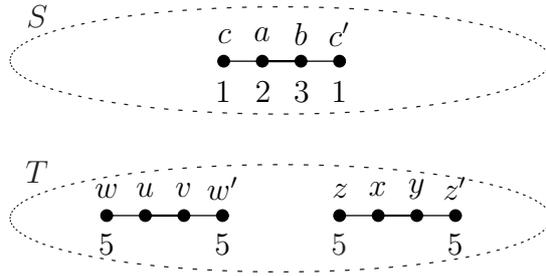}
\caption{The 5-coloring $f$ of $G$ when $ab \in E(G''[S])$ and $uv,xy \in E(G''[T])$}
\label{fig:g5_s}
\end{figure}

By Claim~\ref{cl:5c_cl1},
if we can transform $f$ into a 5-coloring $g$ such that $|E(G''[S])| + |E(G''[T])| \leq 2$
(where $V_1$ and $V_5$ are here determined by $g$),
then we are done.
In what follows, 
we show that $f$ can be transformed into such a 5-coloring $g$ by K-changes.

If $c$ or $c'$, say $c$, is not adjacent to a vertex with color~2 or~3,
then we can change $f(c)$ to~2 or~3 and then $f(a)$ to~1 by Lemma~\ref{lem:no_edge}(ii).
This operation is denoted by $\Tc_{c}$ for $c$.
Moreover,
if a vertex $p \in N_G(c) \cap T$ with color~2 or~3 is not adjacent to a vertex of color~4 or~5,
then $f(p)$ can be changed to~4 or~5 by Lemma~\ref{lem:no_edge}(ii)
and after that we can apply $\Tc_{c}$.
Thus, we may assume that $c$ is adjacent to $u,x$, $f(u) = 2$, $f(v)=4$, $f(x) = 3$ and $f(y)=4$.
However, since colors~4 and~5 can be freely exchanged in $T$,
we have a desired configuration that $u$ is not adjacent to a vertex of color~4,
by exchanging the colors of $v$ and $w'$.
Therefore, 
we can transform $f$ into a 5-coloring $g$ such that $|E(G''[S])| + |E(G''[T])| \leq 2$ by K-changes.
This completes the proof of the proposition.
\end{proof}

We are ready to prove Theorem~$\ref{thm:main}$.

\begin{proof}[Proof of Theorem~$\ref{thm:main}$]
Let $G$ be a $(k-1)$-colorable $B+E_{\ell}$ graph with $k \geq 4$ and $\ell < \binom{k}{2}$.
Suppose that $G$ is colored by $k$ colors $1,2,\dots,k$.
Since $|E| < \binom{k}{2}$,
there are two colors, say 1 and $k$, such that $G(1,k)$ contains no edge in $E$.
By Observation~\ref{obs:1stset}, we have $k \notin C(S)$ and $1 \notin C(T)$.
Moreover, by Observation~\ref{obs:2ndset},
each component in $G[S]$ (resp., $G[T]$) has color~1 (resp.,~$k$)
and we further make the number of vertices with color~1 (resp., color~$k$) in $S$ (resp., $T$) be as large as possible
by Lemma~\ref{lem:no_edge}(ii).
Thus, similarly to Proposition~\ref{prop:5colors},
we have the following for every component $H$ in $G[S]$ or $G[T]$:
\begin{itemize}
\item[(a)] If $H$ is a complete bipartite graph, 
then one of partite sets of $H$ has only color~1 or $k$.
\item[(b)] If $H$ is a path or a cycle,
then the number of vertices with color~1 or $k$ in $H$ is at least $\frac{1}{3}|V(H)|$.
\item[(c)] Every vertex of $H$ is adjacent to a vertex of $H$ with color~1 (resp., color~$k$)
if $H \subseteq G[S]$ (resp., $H \subseteq G[T]$).
\end{itemize}

Let $V_i$ be the vertices with color~$i$, and suppose that $|E_T| \geq \frac{1}{2}|E|$.
Observe that if we delete all vertices in $V_k$,
then for each component $H$ in $G[T]$, at least $\frac{2}{3}|E(H)|$ edges are deleted 
by the conditions (a), (b) and (c).
(If $H$ is a path, then at least $\frac{1}{3}(|V(H)| - 2q)$ vertices in $V_k$ have degree~2 in the path,
where $0 \leq q \leq 2$ denotes the number of end-vertices of $H$ in $V_k$.
In this case, we can delete 
$q + \frac{2}{3}(|V(H)| - 2q) = \frac{2}{3}|E(H)| - \frac{1}{3}q  + \frac{2}{3} \geq \frac{2}{3}|E(H)|$ edges
since $|E(H)|=|V(H)|-1$.)
Thus, for the resulting graph $G' = G-V_k$,
$$
|E(G')| \leq |E| - \frac{2}{3} \cdot \frac{1}{2}|E| = \frac{2}{3}|E| < \frac{2}{3} \cdot \binom{k}{2},
$$
and hence, if $|E(G')| \geq \binom{k-1}{2}$, we have $k < 6$.
This implies that for $k \geq 6$, we have $|E(G')| < \binom{k-1}{2}$ by the above deletion.
Moreover, if $k \geq 7$,
$G'$ is 5-colorable (that is, $(k-2)$-colorable) 
since $G$ is 6-colorable by the assumption of the theorem.
Therefore, $G'$ satisfies all assumptions of the theorem when $k \geq 7$,
we can prove the theorem by induction on $k$ for $k \geq 7$ and Lemma~\ref{lem:no_intersect}
(it is trivial that each component of $G'[S]$ and $G'[T]$ is either a path, 
a cycle of length at least~4 or a complete bipartite graph).

Hence we suffice to show the theorem for $k \in \{4,5,6\}$.
For $k=4$ (resp., $k=5$), we are done by Theorem~\ref{thm:c3e5} (resp., Proposition~\ref{prop:5colors}).
So we may assume $k=6$.
Let $G''$ be the graph obtained from $G'$ by deleting all vertices in $V_1$.
Similarly to the previous argument, we have 
$$
|E(G'')| \leq |E|-\frac{2}{3}|E| = \frac{1}{3}|E| < \frac{1}{3} \cdot \binom{6}{2} = 5.
$$
Moreover, each component in $G''[S]$ and $G''[T]$ is an edge or an isolated vertex
(otherwise, contradicts (a), (b), (c) or the assumption for components induced by $E$).
Thus, $\Kc(G'',4)=1$ by Theorem~\ref{thm:bm5} since $G''$ is clearly 4-colorable.
Therefore, since we can change a 6-coloring of $G$ so that $C(S) \cap C(T) = \emptyset$, 
we have $\Kc(G,6) = 1$ by Lemma~\ref{lem:no_intersect}.
The proof of the theorem is completed.
\end{proof}

\section{Conclusion}\label{sec:7}

The Kempe equivalence of colorings of graphs is an important concept in many ways.
Since every two colorings of any bipartite graph are Kempe equivalent,
it is a natural problem 
whether given two colorings of an {\it almost} bipartite graph are Kempe equivalent or not.
So we consider the Kempe equivalence of $B+E_{\ell}$ graphs,
and in particular, 
we propose a conjecture of interest stating that 
every two $k$-colorings of any $(k-1)$-colorable $B+E_{\ell}$ graph with $k \geq 4$ and $\ell < \binom{k}{2}$
are Kempe equivalent (Conjecture~\ref{conj:main}).
This conjecture is more lately disproved for $k \geq 8$ by Cranston and Feghali~\cite{cranston2023kempe},
but the conjecture is possibly true for small $k$.
Throughout this paper,
we show several non-trivial partial solutions for small $k$ and the sharpness of the conjecture.
In particular,
the assumption of Theorem~\ref{thm:main} needs to prove the theorem by induction on $k$.
To prove Conjecture~\ref{conj:main} for $k \in \{6,7\}$,
we probably need to find a new proof method without induction on $k$.
Moreover, we also conjecture that every two 4-colorings of any 4-critical graph are Kempe equivalent
(Conjecture~\ref{conj:4cri}),
and our results are also partial solutions of this conjecture.
However, each of our proofs does not use the 4-criticality of graphs.
Therefore, 
we strongly believe that 
we can prove the Kempe equivalence of 4-colorings of 4-critical graphs as the planar case
by combining the 4-criticality and several ideas that we have not found yet.

\bibliographystyle{alpha}
\bibliography{main}

\newcommand{\etalchar}[1]{$^{#1}$}
\begin{thebibliography}{BHLDN21}

\bibitem[BBFJ19]{bonamy2019conjecture}
Marthe Bonamy, Nicolas Bousquet, Carl Feghali, and Matthew Johnson.
\newblock On a conjecture of mohar concerning kempe equivalence of regular
  graphs.
\newblock {\em Journal of Combinatorial Theory, Series B}, 135:179--199, 2019.

\bibitem[Ber90]{bertschi1990perfectly}
Marc~E Bertschi.
\newblock Perfectly contractile graphs.
\newblock {\em Journal of Combinatorial Theory, Series B}, 50(2):222--230,
  1990.

\bibitem[BHI{\etalchar{+}}20]{bonamy2020diameter}
Marthe Bonamy, Marc Heinrich, Takehiro Ito, Yusuke Kobayashi, Haruka Mizuta,
  Moritz M{\"u}hlenthaler, Akira Suzuki, and Kunihiro Wasa.
\newblock Diameter of colorings under kempe changes.
\newblock {\em Theoretical Computer Science}, 838:45--57, 2020.

\bibitem[BHLDN21]{bonamy2021recolouring}
Marthe Bonamy, Marc Heinrich, Cl{\'e}ment Legrand-Duchesne, and Jonathan
  Narboni.
\newblock On a recolouring version of hadwiger's conjecture.
\newblock {\em arXiv preprint arXiv:2103.10684}, 2021.

\bibitem[BM{\etalchar{+}}76]{bondy1976graph}
John~Adrian Bondy, Uppaluri Siva~Ramachandra Murty, et~al.
\newblock {\em Graph theory with applications}, volume 290.
\newblock Macmillan London, 1976.

\bibitem[CEGS97]{chen1997class}
Guantao Chen, Paul Erd{\H{o}}s, Andr{\'a}s Gy{\'a}rf{\'a}s, and Richard~H
  Schelp.
\newblock A class of edge critical 4-chromatic graphs.
\newblock {\em Graphs and Combinatorics}, 13(2):139--146, 1997.

\bibitem[CF23]{cranston2023kempe}
Daniel~W Cranston and Carl Feghali.
\newblock Kempe classes and almost bipartite graphs.
\newblock {\em arXiv preprint arXiv:2303.09365}, 2023.

\bibitem[Dam03]{damaschke2003linear}
Peter Damaschke.
\newblock Linear-time recognition of bipartite graphs plus two edges.
\newblock {\em Discrete Mathematics}, 262(1-3):99--112, 2003.

\bibitem[Feg21]{feghali2021kempe}
Carl Feghali.
\newblock Kempe equivalence of $4 $-critical planar graphs.
\newblock {\em arXiv preprint arXiv:2101.04065}, 2021.

\bibitem[KR15]{kostochka2015minimum}
Alexandr~V Kostochka and BM~Reiniger.
\newblock The minimum number of edges in a 4-critical graph that is bipartite
  plus 3 edges.
\newblock {\em European Journal of Combinatorics}, 46:89--94, 2015.

\bibitem[LVM81]{las1981kempe}
Michel Las~Vergnas and Henri Meyniel.
\newblock Kempe classes and the hadwiger conjecture.
\newblock {\em Journal of Combinatorial Theory, Series B}, 31(1):95--104, 1981.

\bibitem[Moh06]{mohar2006kempe}
Bojan Mohar.
\newblock Kempe equivalence of colorings.
\newblock In {\em Graph Theory in Paris}, pages 287--297. Springer, 2006.

\bibitem[MS09]{mohar2009new}
Bojan Mohar and Jes{\'u}s Salas.
\newblock A new kempe invariant and the (non)-ergodicity of the
  wang--swendsen--koteck{\`y} algorithm.
\newblock {\em Journal of Physics A: Mathematical and Theoretical},
  42(22):225204, 2009.

\bibitem[Vig00]{vigoda2000improved}
Eric Vigoda.
\newblock Improved bounds for sampling colorings.
\newblock {\em Journal of Mathematical Physics}, 41(3):1555--1569, 2000.

\end{thebibliography}

\end{document}